\theoremstyle{plain}
\newtheorem{thm}{Theorem}[section]
\newtheorem{cor}[thm]{Corollary}
\newtheorem{lem}[thm]{Lemma}
\newtheorem{prop}[thm]{Proposition}
\theoremstyle{definition}
\newtheorem{rem}[thm]{Remark}
\newtheorem{defn}[thm]{Definition}
\newtheorem{exm}[thm]{Example}
\newcommand{\FF}{\mathcal F}
\newcommand{\EE}{\mathcal E}
\newcommand{\N}{\mathbb N}
\newcommand{\GF}{\mathbb F}
\newcommand{\OO}{\mathcal O}
\newcommand{\PP}{\mathcal P}
\newcommand{\Places}{\mathbb P}
\title{ On the Invariants of Towers of Function Fields over Finite Fields }
\author{Florian Hess, Henning Stichtenoth and  Seher Tutdere}
\begin{document}
\sloppy
\maketitle

\begin{abstract} We consider a tower of function fields $\mathcal{F}=(F_n)_{n\geq 0}$ over a finite field $\mathbb{F}_q$ and a finite extension $E/F_0$ such that the sequence $\mathcal{E}:=E\cdot\mathcal{F}=(EF_n)_{n\geq 0}$ is a tower over the field $\mathbb{F}_q$. Then we deal with the following: What can we say about the invariants of $\mathcal{E}$; i.e., the asymptotic number of the places of degree $r$ for any $r\geq 1$ in $\mathcal{E}$,
if those of $\mathcal{F}$ are known?  We give a method based on explicit extensions for constructing towers of function fields over $\GF_q$ with finitely many prescribed invariants being positive, and towers of function fields over $\GF_q$, for $q$ a square, with at least one positive invariant and  certain prescribed invariants being zero. We show the existence of recursive towers attaining the Drinfeld-Vladut bound of order $r$,  for any $r\geq 1$ with $q^r$ a square, see \cite[Problem-2]{BR011}. Moreover, we give some examples of recursive towers with all but one invariants equal to zero.
\end{abstract}

\section{Introduction}
\paragraph{}

M. A. Tsfasman\cite{T092} introduced the notion of asymptotically exact sequences of function fields over finite fields  and studied on the invariants
 \[\beta_r(\FF):=\lim_{n\rightarrow \infty}(\textrm{the number of places of $F_n/\GF_q$ of degree $r$})/(\textrm{genus of $F_n$})\] 
 for any such sequence $\FF=(F_n)_{n\geq 0}$ with $r\geq 1$. 
The sequences for which $\beta_r$ exists and big are useful in information theory to obtain good algebraic geometric codes and  bounds for multiplication complexity in finite fields. S. Ballet and R. Rolland \cite{BR011} showed that these particular sequences have large asymptotic class number. In particular, one is interested in the exact sequences with small \textit{deficiency}, i.e., the difference between the right hand side and the left hand side of the inequality (\ref{bound}), which is related to the limit distribution of zeroes of zeta functions, see \cite{TV002}.  Lebacque\cite{L1009} gave an asymptotic estimate for the deficiency of some towers of function fields. 
 In 2007, Hasegawa\cite{H2007} and Lebacque\cite{L1007} independently  gave a proof of the existence of towers of  function fields (which are asymptotically exact sequences, see Theorem \ref{thmprem}(i)) with finitely many prescribed invariants $\beta_r>0$, using class field theory. However, in  \cite[p.64]{LZ011}), it is mentioned  that to  find towers of function fields with at least one nonzero invariant and certain prescribed invariants being zero is more difficult.
  
 The following open problem is stated in \cite{BR011}: Find asymptotically exact sequences of function fields over any finite field $\GF_q$,  attaining the Drinfeld-Vladut bound of order $r$ for any $r>2$ (except in the case $r=4$ and $q=2$, which is solved in \cite{BR011}). Note that when $q$ is a square and $r=1$, there are several examples, namely optimal towers, see \cite{GS095}. S. Ballet and R. Rolland \cite{BR011}  proved that for any prime power $q$ there exists a tower attaining the Drinfeld-Vladut bound of order $2$. Moreover, it is clear that the sequences attaining the Drinfeld-Vladut bound for some $r$ have deficiency zero. However, apart from the case that $q$ is a square, it is not known whether there are any exact sequences with \textit{deficiency} zero. We also compute here the exact value of \textit{deficiency} in all examples.

 The organization of the paper is as follows.
 
  In Section 2, we recall the basic definitions and introduce the notations. 
 
 In Section 3, we give some bounds for the invariants of towers of function fields over finite fields. Then we prove the existence of towers with  finitely many prescribed  invariants being positive and give a method for the construction of such towers, by using explicit extensions. Moreover, we prove that one can construct towers over $\GF_q$, for $q$ a square, with at least one positive invariant and certain prescribed invariants being zero.
 
  In Section 4,  we give some examples of non-optimal recursive towers with all but one invariants equal to zero. This is analog to the the open problem  given in \cite[p.3]{L1010}: Are there any infinite number fields (i.e., towers of number fields) with all but one invariants equal to zero. 
Moreover, we show that for any $r\geq 1$, prime power $q$ with $q^r$ a square, there are recursive towers of function fields over $\mathbb{F}_q$ attaining the Drinfeld-Vladut bound of order $r$. 

In the last section,  we give some new observations mainly concerning the quantity $A_r(q)$, namely the $r$-th Ihara's constant (see Definition \ref{Ihara_constant}),  for any prime power $q$ and positive integer $r$.

\section{Preliminaries}
\paragraph{}
Throughout this paper, we use basic facts and notations as in \cite{Stichtenoth}. For an algebraic function field $F/\mathbb{F}_q$  (with the finite field $\mathbb{F}_q$ as its full constant field), we denote by $N(F)$, $g(F)$ and $\mathbb{P}(F)$ the number of degree one places, the genus and the set of all places of $F/\mathbb{F}_q$, respectively.
For $r\geq 1$, define 
\[B_r(F):=\#\left\{ P\in \mathbb{P}(F)| \deg P=r   \right\}.\]
In particular, $B_1(F)=N(F)$. In \cite{T092}, M. A. Tsfasman studied asymptotic properties of the numbers $B_r(F)$ in sequences of function fields over $\mathbb{F}_q$. Specifically, he introduced the following concept:
 \begin{defn}\label{defnprem}
             A sequence $S=(F_n)_{n\geq 0}$ of function fields $F_n/\mathbb{F}_q$ is called asymptotically exact, if $g(F_n)\rightarrow \infty$ as $n\rightarrow 
             \infty$, and for all $r\geq 1$ the limit
             \[\beta_r(S):=\lim_{n\rightarrow \infty}\frac{B_r(F_n)}{g(F_n)} \]
             exists. 
\end{defn}
 For those numbers, one obtain the following bound \cite[Corollary 1]{T092}, \cite[Theorem 3]{S083}:
\begin{thm}[\textbf{Generalized Drinfeld-Vladut bound}]\label{DV}
           For an asymptotically exact sequence $S$ of function fields over a finite field $\mathbb{F}_q$ the following holds:
           \begin{equation}\label {bound}
           \sum _{r=1}^{\infty} \frac{r\beta_r(\mathcal{F})}{q^{r/2}-1} \leq1.
           \end{equation}
\end{thm}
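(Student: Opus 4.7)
The plan is to combine the Weil ``Riemann hypothesis'' for the zeta functions of the individual $F_n$ with a nonnegative trigonometric kernel, following the classical explicit-formula argument.

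Step one is the zeta-function setup. I would factor the numerator $L_n(t) = \prod_{j=1}^{g_n}(1-\alpha_{n,j}t)(1-\overline{\alpha_{n,j}}t)$ with $\alpha_{n,j} = q^{1/2}e^{i\theta_{n,j}}$, so that counting $\mathbb{F}_{q^r}$-rational points in two ways yields for every $r\geq 1$
\[
\sum_{d\mid r} d\, B_d(F_n) \;=\; q^r + 1 - 2q^{r/2}\sum_{j=1}^{g_n}\cos(r\theta_{n,j}).
\]
Dividing by $g_n$, using $g_n\to\infty$ and $B_d(F_n)/g_n\to\beta_d$, the limit
\[
\gamma_r \;:=\; \lim_{n\to\infty}\frac{2}{g_n}\sum_{j=1}^{g_n}\cos(r\theta_{n,j}) \;=\; -\,q^{-r/2}\sum_{d\mid r} d\beta_d
\]
exists and is nonpositive.

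Step two is to evaluate the Fejér kernel
\[
F_N(\theta) \;=\; 1 + 2\sum_{r=1}^{N}\Bigl(1-\tfrac{r}{N+1}\Bigr)\cos(r\theta) \;=\; \frac{1}{N+1}\left(\frac{\sin((N+1)\theta/2)}{\sin(\theta/2)}\right)^{\!2} \;\geq\; 0
\]
at each $\theta_{n,j}$, sum over $j$, and divide by $g_n$. Since the $r$-sum is finite, I can exchange the limit $n\to\infty$ with the sum, obtaining
\[
1 + \sum_{r=1}^{N}\Bigl(1-\tfrac{r}{N+1}\Bigr)\gamma_r \;\geq\; 0.
\]
Substituting the explicit formula for $\gamma_r$ and reindexing by $r = kd$ transforms this into
\[
\sum_{d=1}^{N} d\beta_d \sum_{k=1}^{\lfloor N/d\rfloor}\Bigl(1-\tfrac{kd}{N+1}\Bigr) q^{-kd/2} \;\leq\; 1.
\]

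Step three is to pass to $N\to\infty$. Using $\sum_{k\geq 1} q^{-kd/2} = (q^{d/2}-1)^{-1}$, this should converge to $\sum_{d\geq 1} d\beta_d/(q^{d/2}-1) \leq 1$. The main delicate point is exactly this last limit, since one cannot interchange the outer and inner sums naively. Instead I would first fix a truncation $M$, restrict the outer sum to $d\leq M$, let $N\to\infty$ to obtain $\sum_{d=1}^{M} d\beta_d/(q^{d/2}-1) \leq 1$, and then let $M\to\infty$. Nonnegativity of every term and of the geometric tails makes this two-stage limit legitimate by monotone convergence on partial sums, so no further quantitative estimates are needed.
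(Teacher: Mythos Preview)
The paper does not give its own proof of this theorem; it simply attributes the result to \cite[Corollary 1]{T092} and \cite[Theorem 3]{S083}. Your argument is correct and is essentially the classical explicit-formula proof (Weil bounds plus a nonnegative Fej\'er-type kernel) that appears in those references, so there is nothing to compare against in the present paper.
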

\begin{defn}\label{Ihara_constant} For every $r\geq 1$, the real number 
             \[ A_r(q):=\limsup_{g\rightarrow \infty} \frac{B_r(F)}{g}\]
             where $F$ runs over all function fields over $\mathbb{F}_q$ of genus $g>0$ is called the $r$-th Ihara's constant.
             
             Moreover, the difference between the right hand side and the left hand side of the inequality (\ref{bound}) is called the \textit{deficiency} of the sequence $\mathcal{F}$. This is related to the limit distribution of zeroes of zeta functions, for details see \cite{TV002}. 
\end{defn}
As a consequence of Theorem \ref{DV}, one has 
\begin{cor} 
           \[A_r(q)\leq \frac{q^{r/2}-1}{r}.\]
\end{cor}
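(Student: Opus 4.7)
The plan is to deduce the corollary from Theorem \ref{DV} by realizing $A_r(q)$ as the invariant $\beta_r$ of a suitably chosen asymptotically exact sequence. First, using the definition of $\limsup$, I would select a sequence $(F_n)_{n\geq 0}$ of function fields over $\mathbb{F}_q$ with $g(F_n)\to\infty$ such that $B_r(F_n)/g(F_n)\to A_r(q)$.

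Such a sequence need not be asymptotically exact, since for $s\neq r$ the ratios $B_s(F_n)/g(F_n)$ are not a priori convergent. To remedy this I would pass to a subsequence by a diagonal extraction argument. The key input is a uniform bound on $B_s(F)/g(F)$ for each fixed $s$, independent of $F$, which follows from a Hasse--Weil estimate applied to the constant field extension $F\cdot\mathbb{F}_{q^s}$: each degree-$s$ place of $F$ contributes $s$ rational places in the extension, whose total number is bounded by Weil, giving $sB_s(F)\leq q^s+1+2g(F)\sqrt{q^s}$ and hence $B_s(F)/g(F)\leq 2\sqrt{q^s}/s+o(1)$ as $g(F)\to\infty$. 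Thus for each $s$ the sequence $B_s(F_n)/g(F_n)$ lies in a bounded subset of $\mathbb{R}$, and a standard diagonal argument produces a subsequence $(F_{n_k})$ along which $B_s(F_{n_k})/g(F_{n_k})\to\beta_s$ for every $s\geq 1$.

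By construction the subsequence $(F_{n_k})$ is asymptotically exact in the sense of Definition \ref{defnprem}, and $\beta_r=A_r(q)$ since it is a subsequential limit of a convergent sequence. Applying Theorem \ref{DV} to $(F_{n_k})$ yields $\sum_{s\geq 1}s\beta_s/(q^{s/2}-1)\leq 1$; since every term on the left is non-negative, the single term with index $r$ already satisfies $r\beta_r/(q^{r/2}-1)\leq 1$, and rearranging gives $A_r(q)=\beta_r\leq (q^{r/2}-1)/r$. The main obstacle I anticipate is the careful bookkeeping of the diagonal extraction together with the uniform Hasse--Weil bound on $B_s(F)/g(F)$; once these are set up, the inequality drops out of a single term of the Generalized Drinfeld--Vladut sum.
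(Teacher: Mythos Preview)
Your argument is correct. The paper does not give a detailed proof of this corollary; it simply states the inequality as a consequence of Theorem~\ref{DV} and leaves the deduction to the reader. Your diagonal extraction to produce an asymptotically exact subsequence with $\beta_r = A_r(q)$, followed by discarding all terms but the $r$-th one in the Generalized Drinfeld--Vladut sum, is precisely the standard way to make this deduction rigorous, and the Hasse--Weil bound you invoke for the uniform boundedness of $B_s(F)/g(F)$ is the right ingredient.
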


In this paper we will consider specific sequences of function fields over $\mathbb{F}_q$, namely towers. We will show that they are asymptotically exact, and we will study their invariants defined in Definition \ref{maindefn}.

An infinite sequence $\mathcal{F}=(F_n)_{n\geq 0}$ of function fields $F_n/\mathbb{F}_q$  is called a \textit{tower} over $\mathbb{F}_q$, if
\[F_0\subsetneqq F_1\subsetneqq F_2 \subsetneqq\ldots,\]
all extensions $F_{i+1}/F_i$ are finite separable, and $g(F_n)\rightarrow \infty$ as $n\rightarrow \infty$. 

\begin{prop}\label{propprem}
              Let $\mathcal{F}=(F_n)_{n\geq 0}$ be a tower over $\mathbb{F}_q$, $P\in \mathbb{P}(F_0)$ and $r\geq 1$. Set 
              \[ B_r(P,F_n):=\#\left\{ Q\in \mathbb{P}(F_n): Q|P \textrm{ and } \deg Q=r\right\}.\]
            Then the sequence 
             \[ \left( \frac{B_r(P,F_n)}{[F_n:F_0]}  \right)_{n\geq 0}\]
            is convergent. 
\end{prop}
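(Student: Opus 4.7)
The plan is to introduce an auxiliary aggregated counting function
\[
T_r(P,F_n) := \sum_{d \mid r} d \cdot B_d(P,F_n),
\]
which weights each place above $P$ of degree dividing $r$ by its degree. The motivation is that $T_r$, unlike $B_r$ alone, interacts cleanly with the fundamental identity $\sum_{Q' \mid Q} e(Q'|Q)f(Q'|Q) = [F_m:F_n]$, whereas $B_r$ can receive contributions from below-$Q$ places of any degree $d \mid r$ that inflates in a tower step from $F_n$ to $F_m$.

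First I would prove the monotonicity
\[
\frac{T_r(P,F_m)}{[F_m:F_0]} \leq \frac{T_r(P,F_n)}{[F_n:F_0]} \qquad \text{for all } m \geq n.
\]
To see this, note that $T_r(P,F_m) = \sum \deg Q'$, where the sum runs over $Q' \in \mathbb{P}(F_m)$ with $Q' \mid P$ and $\deg Q' \mid r$. Grouping by the unique place $Q \in \mathbb{P}(F_n)$ with $Q' \mid Q$ (and observing $\deg Q \mid \deg Q' \mid r$), for each such $Q$ of degree $d$
\[
\sum_{\substack{Q' \mid Q \\ \deg Q' \mid r}} \deg Q' \;=\; d \sum_{\substack{Q' \mid Q \\ \deg Q' \mid r}} f(Q'|Q) \;\leq\; d \sum_{Q' \mid Q} e(Q'|Q) f(Q'|Q) \;=\; d \cdot [F_m:F_n].
\]
Summing over $Q$ above $P$ with $\deg Q \mid r$ yields $T_r(P,F_m) \leq [F_m:F_n] \cdot T_r(P,F_n)$, which after dividing by $[F_m:F_0] = [F_m:F_n] \cdot [F_n:F_0]$ gives the claimed monotonicity. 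Since $T_r(P,F_n)/[F_n:F_0] \geq 0$, the sequence converges.

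Next I would deduce convergence of $B_r(P,F_n)/[F_n:F_0]$ by induction on $r$. The base case $r = 1$ is immediate because $T_1(P,F_n) = B_1(P,F_n)$. For $r > 1$, writing $r B_r = T_r - \sum_{d \mid r,\, d < r} d B_d$ and dividing by $[F_n:F_0]$ gives
\[
\frac{B_r(P,F_n)}{[F_n:F_0]} \;=\; \frac{1}{r}\left( \frac{T_r(P,F_n)}{[F_n:F_0]} - \sum_{\substack{d \mid r \\ d < r}} d \cdot \frac{B_d(P,F_n)}{[F_n:F_0]} \right),
\]
and each term on the right converges (the $T_r$-term by the first step, the rest by the inductive hypothesis), so the left side converges as well.

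The main technical step is the monotonicity inequality; the inductive Möbius-style disentangling of $B_r$ from $T_r$ is then purely formal. One subtlety worth double-checking is the trivial case $\deg P \nmid r$, where every $B_r(P,F_n)$ vanishes so convergence is automatic; otherwise $\deg P \mid r$ and the estimate above applies as written.
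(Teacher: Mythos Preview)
Your proof is correct and shares its skeleton with the paper's: both introduce the aggregated quantity $T_r(P,F_n)=\sum_{d\mid r} d\,B_d(P,F_n)$, establish that $T_r(P,F_n)/[F_n:F_0]$ converges, and then recover $B_r$ by induction on $r$. The difference lies in how convergence of $T_r/[F_n:F_0]$ is shown. You prove monotonicity directly from the fundamental equality $\sum_{Q'\mid Q} e(Q'|Q)f(Q'|Q)=[F_m:F_n]$, which is clean and self-contained. The paper instead passes to the constant field extension $\mathcal{F}\cdot\mathbb{F}_{q^r}$, observes that $T_r(P,F_n)=\sum_{j=1}^{d} B_1(P_j, F_n\cdot\mathbb{F}_{q^r})$ (where $P_1,\ldots,P_d$ are the degree-one places over $P$), and then invokes the $r=1$ case in the new tower. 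Your route is more elementary since it avoids the constant field extension; the paper's route gives a conceptual interpretation of $T_r$ as a rational-point count in the extended tower, which reappears later in the paper (e.g.\ in Remark~\ref{Hasegawa.rem}).
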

\begin{proof} 
           Our proof is similar to T. Hasegawa's proof that the sequence \\$\left(B_r(F_n)/g(F_n)\right)_{n\geq 0}$ is convergent (cf.\cite[Proposition
            2.2]{H1007}). We proceed by induction over $r$. For $r=1$, the sequence $\left(B_1(P,F_n)/[F_n:F_0]\right)_{n\geq 0}$ is monotonically decreasing,
             and so convergent (cf.\cite[Lemma 7.2.3(a)]{Stichtenoth}). Now let $r\geq 1$ and assume that for all $1\leq s<r$, the sequence
              $\left( B_s(P,F_n)/[F_n:F_0]\right)_{n\geq 0}$ is convergent. Let $d:=\deg P$ with $d\nmid r$, then $B_r(P,F_n)=0$ for all $n\geq 0$. Hence, we can
            assume that $d\mid r$.
            
              Consider the constant field extension of $\mathcal{F}$ with the field $\mathbb{F}_{q^r}$; i.e., 
              \[ \mathcal{F}\cdot \mathbb{F}_{q^r}:=(F_n\cdot \mathbb{F}_{q^r})_{n\geq 0}.\]
            This is clearly a tower over $\mathbb{F}_{q^r}$. The place $P\in \mathbb{P}(F_0)$ splits into $P_1,\ldots,P_d \in \mathbb{P}(F_0\cdot
             \mathbb{F}_{q^r})$ of degree one, and all places of $F_n$ of degree $s\mid r$ split into $s$ degree one places of
              $F_n\cdot         \mathbb{F}_{q^r}/\mathbb{F}_{q^r}$. Hence, the following formula holds (cf. \cite[p. 206]{Stichtenoth}):
            \[ \sum_{s\mid r} s\cdot B_s(P,F_n)=\sum_{j=1}^{d}B_1(P_j,F_n\cdot \mathbb{F}_{q^r}).\]
            By induction hypothesis, the sequences
         \[ \left( \frac{B_s(P,F_n)}{[F_n:F_0]}  \right)_{n\geq 0}\textrm{ and } \left( \frac{B_1(P_j,F_n\cdot \mathbb{F}_{q^r})}{[F_n:F_0]}  \right)_{n\geq  0}\]            are convergent for $s<r$. Hence also the sequence $\left(B_r(P,F_n)/[F_n:F_0]\right)_{n\geq 0}$ converges. 
 \end{proof}
\begin{cor} \label{corprem} Let $\mathcal{F}=(F_n)_{n\geq 0}$ be a tower over $\mathbb{F}_q$, $P$ a place of $F_0$ and $r\geq 1$. Then the sequences
                        \[ \left(\frac{B_r(P,F_n)}{g(F_n)}\right)_{n\geq 0}, \left(\frac{B_r(F_n)}{[F_n:F_0]}\right)_{n\geq 0} \textrm{ and              }\left(\frac{B_r(F_n)}{g(F_n)}  \right)_{n\geq 0}\] 
                        are convergent. 
\end{cor}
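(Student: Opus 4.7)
The plan is to reduce everything to Proposition \ref{propprem} together with the well-known fact that in a tower the ratio $g(F_n)/[F_n:F_0]$ is convergent. More precisely, the Hurwitz genus formula, applied step by step to $F_{n+1}/F_n$, shows that the sequence $g(F_n)/[F_n:F_0]$ is monotonically increasing (see e.g.\ \cite[Lemma 7.2.3]{Stichtenoth}), so it converges to some $\gamma \in (0,\infty]$, and equivalently $[F_n:F_0]/g(F_n)$ converges to $1/\gamma \in [0,\infty)$. This is the one nontrivial ingredient; once it is in hand, the rest is bookkeeping.

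For the first sequence I would simply write
\[
\frac{B_r(P,F_n)}{g(F_n)} \;=\; \frac{B_r(P,F_n)}{[F_n:F_0]} \cdot \frac{[F_n:F_0]}{g(F_n)},
\]
and observe that the first factor converges by Proposition \ref{propprem} and the second by the remark above, so the product converges (to $0$ in the case $\gamma=\infty$). For the second sequence, the key observation is that any place $Q \in \mathbb{P}(F_n)$ of degree $r$ lies above a unique place $P$ of $F_0$ whose degree divides $r$. Since $F_0/\mathbb{F}_q$ has only finitely many places of each given degree, the set $S := \{P \in \mathbb{P}(F_0) : \deg P \mid r\}$ is finite, depending only on $F_0$ and $r$, and we have the finite decomposition
\[
B_r(F_n) \;=\; \sum_{P \in S} B_r(P,F_n).
\]
Dividing by $[F_n:F_0]$ and applying Proposition \ref{propprem} termwise to this finite sum yields convergence of $B_r(F_n)/[F_n:F_0]$. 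The third sequence is then handled exactly as the first, by factoring
\[
\frac{B_r(F_n)}{g(F_n)} \;=\; \frac{B_r(F_n)}{[F_n:F_0]} \cdot \frac{[F_n:F_0]}{g(F_n)}
\]
and using the convergence just established together with the monotonicity fact.

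The only step that is not entirely routine is the finiteness of $S$, which is what makes the passage from the ``local'' statement of Proposition \ref{propprem} (fixed place $P$) to the ``global'' statement for $B_r(F_n)$ work. I anticipate no real obstacle, but I would be careful to cite the appropriate monotonicity lemma in \cite{Stichtenoth} explicitly, since the entire argument collapses without it.
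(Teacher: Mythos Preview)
Your proposal is correct and follows essentially the same approach as the paper: both invoke the convergence of $g(F_n)/[F_n:F_0]$ (the genus of the tower), factor the first and third sequences through $[F_n:F_0]$, and express $B_r(F_n)$ as a sum of the local quantities $B_r(P,F_n)$. The only cosmetic difference is that you explicitly restrict the sum to the finite set $S=\{P:\deg P\mid r\}$, whereas the paper writes $\sum_{P\in\mathbb{P}(F_0)}B_r(P,F_n)$ and leaves implicit that all but finitely many terms vanish; your version is arguably cleaner on this point.
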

\begin{proof}
            We recall that the sequence $\left(g(F_n)/[F_n:F_0]\right)_{n\geq 0}$ is convergent in $\mathbb{R}^+\cup \left\{ \infty \right\}$, and its limit \begin{equation}\label{eqnprem}
           \gamma(\mathcal{F}):=\lim_{n\rightarrow \infty} \frac{g(F_n)}{[F_n:F_0]} 
\end{equation}
              is called the genus of $\mathcal{F}$, see \cite[p.247 ]{Stichtenoth}.  The converges of the  sequence $\left(B_r(P,F_n)/g(F_n)\right)_{n\geq 0}$                  follows then immediately from Proposition \ref{propprem} and 
           \[ \frac{B_r(P,F_n)}{g(F_n)}=\frac{B_r(P,F_n)}{[F_n:F_0]}\cdot \frac{[F_n:F_0]}{g(F_n)}.\]
          Since 
         \[B_r(F_n)=\sum_{P\in \mathbb{P}(F_0)} B_r(P,F_n),\]
          also the other sequences in Corollary \ref{corprem} are convergent. 
\end{proof}
As a consequence, the following definitions make sense:
\begin{defn} \label{maindefn} 
             Let $\mathcal{F}=(F_n)_{n\geq 0}$ be a tower over $\mathbb{F}_q$, let $P\in \mathbb{P}(F_0)$ and $r\geq 1$. Define the real numbers
             \[ \nu_r(P,\mathcal{F}):=\lim_{n\rightarrow \infty} \frac{B_r(P,\mathcal{F})}{[F_n:F_0]},\quad \beta_r(P,\mathcal{F}):=\lim_{n\rightarrow \infty}                 \frac{B_r(P,F_n)}{g(F_n)},\]
             \[ \nu_r(\mathcal{F}):=\lim_{n\rightarrow \infty} \frac{B_r(\mathcal{F})}{[F_n:F_0]},\quad \beta_r(\mathcal{F}):=\lim_{n\rightarrow  \infty}\frac{B_r(F_n)}{g(F_n)}. \]
             We call $\nu_r(P,\mathcal{F})$ and  $\beta_r(P,\mathcal{F})$ \textit{local invariants} at $P$,  $\nu_r(\mathcal{F})$ and  $\beta_r(\mathcal{F})$
              \textit{ global invariants} of $\mathcal{F}$. Note that the definition of $\beta_r(\mathcal{F})$ is consistent with Definition \ref{defnprem}. 
          The sets 
          \[ Supp(\mathcal{F}):=\left\{ P\in \mathbb{P}(F_0): \nu_r(P,\mathcal{F})>0 \textrm{ for some $r\in \mathbb{N}$}\right\} \textrm{ and }\]
          \[\mathcal{P}(\mathcal{F}):=\left\{r\in \mathbb{N}: \; \nu_r(\mathcal{F})>0\right\}\]
            are called  the \textit{support} and the set of the \textit{positive  parameters} of $\mathcal{F}$, respectively.
\end{defn}
          We summarize as follows:
\begin{thm} \label{thmprem} 
           Let $\mathcal{F}=(F_n)_{n\geq 0}$ be a tower over $\mathbb{F}_q$. Then one has:
\begin{itemize}
           \item[(i)] For all $r\geq 1$, the limit
           \[ \beta_r(\mathcal{F}):=\lim_{n\rightarrow \infty} \frac{B_r(F_n)}{g(F_n)}\]
           exists; i.e., the tower is asymptotically exact. 
          \item[(ii)] (Generalized Drinfeld-Vladut bound and Deficiency) 
          \[ \sum_{r=1}^{\infty} \frac{r\beta_r(\mathcal{F})}{q^{r/2}-1} \leq 1,\]
          and the difference between the right hand side and the left hand side of this inequality is called the \textit{deficiency} of $\FF$.
          \item[(iii)] (Drinfeld-Vladut bound of order $r$) For all $r\geq 1$,
          \[ \beta_r(\mathcal{F})\leq A_r(q) \leq \frac{q^{r/2}-1}{r},\]
          where $A_r(q)$ is the $r$-th Ihara's constant. 
          \item[(iv)] Let $P\in \mathbb{P}(F_0)$ and $r\geq 1$. Then 
\[\beta_r(P,\mathcal{F})=\frac{\nu_r(P,\mathcal{F})}{\gamma(\mathcal{F})} \textrm{ and }  \beta_r(\mathcal{F})=\frac{\nu_r(\mathcal{F})}{\gamma(\mathcal{F})},\]
          where $\gamma(\mathcal{F})$ is the genus of the tower (see Equation (\ref{eqnprem})). 
          \item[(v)] For all $r\geq 1$, \[\nu_r(\mathcal{F})=\sum_{P\in \mathbb{P}(F_0)}\nu_r(P,\mathcal{F}) \textrm{ and }  \beta_r(\mathcal{F})=\sum_{P\in                  \mathbb{P}(F_0)}\beta_r(P,\mathcal{F}).\]
\end{itemize}
\end{thm}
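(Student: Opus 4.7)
The plan is to stitch together the convergence and inequality statements already established in Proposition \ref{propprem}, Corollary \ref{corprem}, and Theorem \ref{DV}, handling items (i)--(v) in order, since each essentially builds on the previous.

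For (i) I would simply invoke the third sequence in Corollary \ref{corprem}, whose convergence is already the content of that corollary. Once (i) is in hand, $\mathcal{F}$ fits Definition \ref{defnprem}, so (ii) becomes an immediate application of Theorem \ref{DV}. For (iii), the inequality $\beta_r(\mathcal{F})\leq A_r(q)$ holds because $A_r(q)$ is defined as a $\limsup$ of $B_r(F)/g(F)$ over function fields of growing genus, and since $g(F_n)\to\infty$ the value $\beta_r(\mathcal{F})$ is a particular limit of such a sequence and hence is bounded by that $\limsup$; the second inequality $A_r(q)\leq (q^{r/2}-1)/r$ is exactly the corollary to Theorem \ref{DV} already recorded in the excerpt.

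For (iv), the natural move is to split
\[\frac{B_r(P,F_n)}{g(F_n)}=\frac{B_r(P,F_n)}{[F_n:F_0]}\cdot\frac{[F_n:F_0]}{g(F_n)},\]
observe that the second factor converges to $1/\gamma(\mathcal{F})$ by Equation (\ref{eqnprem}), and apply Proposition \ref{propprem} to the first factor; the global version is identical with $B_r(F_n)$ in place of $B_r(P,F_n)$. The case $\gamma(\mathcal{F})=\infty$ is absorbed by the convention $x/\infty = 0$, consistent with $\beta_r(P,\mathcal{F})$ and $\beta_r(\mathcal{F})$ both vanishing in that case. For (v) I would note that any place $Q$ of $F_n$ of degree $r$ lies above some $P\in\mathbb{P}(F_0)$ with $\deg P \mid r$, and that $F_0/\mathbb{F}_q$ has only finitely many places of each degree; hence the sum $B_r(F_n)=\sum_{P\in\mathbb{P}(F_0)}B_r(P,F_n)$ effectively ranges over a finite index set that is independent of $n$. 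Both equalities in (v) then follow by exchanging the limit with this finite sum, justified termwise by Proposition \ref{propprem} and Corollary \ref{corprem}.

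There is no truly hard step here; the real analytic content has already been done in proving Proposition \ref{propprem} and Corollary \ref{corprem}. The only points requiring a line of care are the $\gamma(\mathcal{F})=\infty$ convention in (iv), and the observation in (v) that the nominal sum over the infinite set $\mathbb{P}(F_0)$ is actually finite, which is what lets me avoid any dominated-convergence style argument when interchanging the limit with a sum over places of $F_0$.
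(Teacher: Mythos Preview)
Your proposal is correct and matches the paper's intent: the paper presents Theorem \ref{thmprem} with the phrase ``We summarize as follows'' and gives no separate proof, since each item is meant to follow directly from Proposition \ref{propprem}, Corollary \ref{corprem}, Theorem \ref{DV}, and its corollary. Your derivation of (iv) via the factorization $B_r(P,F_n)/g(F_n)=(B_r(P,F_n)/[F_n:F_0])\cdot([F_n:F_0]/g(F_n))$ and of (v) via the finite decomposition $B_r(F_n)=\sum_P B_r(P,F_n)$ are precisely the arguments already used inside the proof of Corollary \ref{corprem}, so you are simply making explicit what the paper leaves implicit.
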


Henceforth, we consider a tower $\mathcal{F}=(F_n)_{n\geq 0}$ of function fields over $\mathbb{F}_{q}$ and a finite separable extension $E$ of $F_0$. For convenience, we assume that $E,F_0,F_1,\ldots$ are all contained in a fixed algebraically closed field $\Omega$. For simplicity, we set $F:=F_0$ and denote by $\mathcal{E}:=E\cdot \mathcal{F}$ the sequence $\mathcal{E}=(E_n)_{n\geq 0}$, with $E_n:=EF_n$, of function fields over $\mathbb{F}_q$.

If furthermore the sequence $\mathcal{E}$ is a tower over $\mathbb{F}_q$ such that $E/F$ and $F_n/F$ are linearly disjoint for all $n\geq 1$, we             call $\mathcal{E}$ the \textit{ composite} tower of  $\mathcal{F}$ with $E/F$. We will here mainly be interested in the invariants of composite towers.
From now on, for any place $P\in \mathbb{P}(F)$ with an extension $Q$ in $E$, we denote by
\begin{itemize}
\item[-] $e(Q|P), f(Q|P), d(Q|P)$ the ramification index, relative degree and the different of $Q|P$, respectively, and 
\item[-]  $k(P)$ the residue class field of $P$. 
\end{itemize}
\section {Main Results} 
\paragraph{}
As for any tower $\EE/\GF_q$, the invariant $\beta_r(\EE)=\nu_r(\EE)/\gamma(\EE)$ for any $r\geq 1$, it is enough to estimate $\nu_r(\EE)$ and $\gamma(\EE)$. 
In Section 3.1 we assume that $\mathcal{E}:=E\cdot \mathcal{F}$ is a composite tower of a tower $\mathcal{F}$ with  $E/F$ of degree $m:=[E:F]$.
\subsection{\normalsize{Bounds for the invariants of a composite tower}}
We begin with a lemma concerning the splitting of places in the compositum of function fields, \cite[Proposition 3.9.6(a)]{Stichtenoth}.
\begin{lem}\label{split}
 Let $E/F$ and $F'/F$ be finite separable extensions of function fields contained in an algebraic closure of $F$. Suppose that $P$ is a place of $F$ which splits completely in the extension $F'$. Then every place $Q$ of $E$ lying above $P$ splits completely in the compositum $EF'$.
\end{lem}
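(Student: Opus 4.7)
The plan is to fix an arbitrary place $Q$ of $E$ lying above $P$ and to verify that $Q$ splits completely in the extension $EF'/E$ by checking that every place $R$ of $EF'$ above $Q$ satisfies $e(R|Q)=f(R|Q)=1$. Once this is established, the fundamental equality $\sum_{R|Q}e(R|Q)f(R|Q)=[EF':E]$ automatically yields exactly $[EF':E]$ places of $EF'$ above $Q$, each unramified and of residue degree one, which is the definition of complete splitting.

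To carry out the check, I would restrict any such $R$ to $F'$, obtaining a place $P':=R\cap F'$ of $F'$ lying above $P$. Since $P$ is assumed to split completely in $F'/F$, one immediately has $e(P'|P)=f(P'|P)=1$ for every place $P'$ of $F'$ above $P$, and in particular for the restriction just obtained. The crucial input is then the composita estimate provided by Proposition~3.9.6(a) of \cite{Stichtenoth}, applied to the finite separable extensions $E/F$ and $F'/F$ inside the fixed algebraic closure at the places $Q$, $P'$ and $R$ above the common place $P$: it gives $e(R|Q)\leq e(P'|P)$ and $f(R|Q)\leq f(P'|P)$. Together with the previous equalities this forces $e(R|Q)=f(R|Q)=1$, as desired.

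The only real content of the argument is the invocation of the cited proposition; everything else is formal bookkeeping with the multiplicativity of $e$ and $f$ in the towers $F\subseteq F'\subseteq EF'$ and $F\subseteq E\subseteq EF'$ and with the fundamental degree equation. Consequently I do not expect any substantive obstacle: the main point is merely to identify the correct composita statement and to verify that its hypotheses (finite separable extensions sitting inside a common algebraic closure of $F$) are precisely the ones assumed in the lemma, which they are.
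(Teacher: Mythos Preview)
Your argument is correct and coincides with the paper's treatment: the paper gives no separate proof of this lemma but simply records it as \cite[Proposition~3.9.6(a)]{Stichtenoth}, the very composita result you invoke. What you have written is just that citation unpacked into the standard verification that $e(R|Q)=f(R|Q)=1$ for every $R$ above $Q$ in $EF'$.
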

\begin{prop}\label{mainprop}  For the composite tower $\mathcal{E}$, for any $s\geq 1$, we obtain 
\[\nu_s(\mathcal{E})\geq \#\left\{ Q\in \mathbb{P}(E)|\textrm{ $\deg Q=s$ and $Q\cap F$ splits completely in $\mathcal{F}$}\right\}.\]
\end{prop}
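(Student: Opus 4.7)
The plan is to apply Lemma \ref{split} to each place $Q$ occurring on the right-hand side and then count extensions in $E_n$. Write
\[S_s := \{Q \in \mathbb{P}(E) : \deg Q = s \text{ and } Q \cap F \text{ splits completely in } \mathcal{F}\};\]
this set is finite since $E$ has only finitely many places of degree $s$. Recall also that $E_0 = E$, so by Definition \ref{maindefn}
\[\nu_s(\mathcal{E}) \;=\; \lim_{n\to\infty} \frac{B_s(E_n)}{[E_n:E]}.\]

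Now fix $n \geq 0$ and $Q \in S_s$, and put $P := Q \cap F$. Since $P$ splits completely in $F_n/F$ by the definition of $S_s$, Lemma \ref{split} applied with $F' = F_n$ shows that $Q$ splits completely in $EF_n = E_n$ over $E$. By the fundamental equality this produces exactly $[E_n:E]$ places of $E_n$ above $Q$, each having relative degree $1$ over $Q$ and therefore degree $s$ in $E_n$. Distinct $Q \in S_s$ yield disjoint collections of places of $E_n$, since each place of $E_n$ contracts to a unique place of $E$. Summing over $Q \in S_s$ therefore gives
\[B_s(E_n) \;\geq\; |S_s| \cdot [E_n:E].\]

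Dividing by $[E_n:E]$ and letting $n \to \infty$ yields $\nu_s(\mathcal{E}) \geq |S_s|$, which is the asserted inequality. There is no real obstacle here: Lemma \ref{split} transfers complete splitting from $\mathcal{F}$ to $\mathcal{E}$, and the remainder is a disjoint counting argument that simply discards any additional degree-$s$ places of $E_n$ not arising from some $Q \in S_s$. Note that the linear disjointness of $E/F$ and $F_n/F$ built into the notion of composite tower is not actually required for the inequality itself; it is only what makes $\mathcal{E}$ a tower in the first place, so that $\nu_s(\mathcal{E})$ is well-defined.
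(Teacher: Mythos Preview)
Your proof is correct and follows essentially the same approach as the paper: apply Lemma~\ref{split} to see that each $Q\in S_s$ splits completely in every $E_n$, and then count. The only cosmetic difference is that the paper phrases the counting via the local invariants, deducing $\nu_s(Q,\mathcal{E})=1$ for each such $Q$ and then invoking Theorem~\ref{thmprem}(v), whereas you bound $B_s(E_n)$ directly; the underlying argument is the same.
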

\begin{proof} Let $Q\in \mathbb{P}(E)$ and $P:=Q\cap F$ such that $P$ splits completely in $\mathcal{F}$. Then by Lemma \ref{split}, $Q$ splits completely in                    $E_n$ for all $n\geq 1$. Hence, 
\begin{equation*}
              B_s(Q,E_n)=[E_n:E] \textrm{ where $s=\deg Q$},
\end{equation*}
             which yields  $\nu_s(Q,\mathcal{E})=1$, and so by Theorem \ref{thmprem}(v) the proposition follows. 
\end{proof}
\begin{rem} For any $d\geq 1$ and $P\in \mathbb{P}(F)$, the following holds:
\begin{equation}
            \sum_{r=1}^m \sum _{\substack{Q\in \mathbb{P}(E)\\ Q|P,\; s=rd}} \nu_s(Q,\mathcal{E})\geq \nu_d(P,\mathcal{F}).
\end{equation}
\end{rem}
\begin{proof} 
            The proof follows from the following argument. Let $P_n\in \Places(F_n)$ (for any $n\geq 1$) lying above $P$ of $\deg P_n=d$ for some $d\geq 1$. Then for  any extension $Q_n$ of $P_n$ in $E_n$, we have $f(Q_n|P_n)=r$ for some $1\leq r\leq m$, and so $\deg Q_n=rd$.
\end{proof}
\begin{prop} \label{propcomp}  Let $Q\in \mathbb{P}(E)$ and $P:=Q\cap F$. Then for all $s>0$,
            \[ \nu_s(Q,\mathcal{E})\leq  \sum_{\substack{ d\in \mathcal{P}(\mathcal{F})\\ d|s,\;d\geq \frac{s}{m}}} \frac{md}{s} \nu_d(P,\mathcal{F}) \textrm{ and }        
            \nu_s(\mathcal{E})\leq  \sum_{\substack{d\in \mathcal{P}(\mathcal{F})\\d|s,\; d\geq \frac{s}{m}}} \frac{md}{s} \nu_d(\mathcal{F}).\]        
\end{prop}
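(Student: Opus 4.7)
The plan is to count degree-$s$ places of $E_n$ lying above $Q$ (respectively, all degree-$s$ places of $E_n$) by sorting them according to the degree $d$ of their contraction $P_n$ to $F_n$, then bound the number of places of $E_n$ over each $P_n$ by exploiting the fundamental equality $\sum_{Q_n|P_n} e(Q_n|P_n)\,f(Q_n|P_n) = [E_n : F_n] = m$, where the last equality uses linear disjointness of $E/F$ and $F_n/F$ in the definition of a composite tower.

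First I would fix $s\geq 1$ and examine an arbitrary $Q_n \in \mathbb{P}(E_n)$ with $Q_n\mid Q$ and $\deg Q_n = s$. Setting $P_n := Q_n \cap F_n$, we have $\deg Q_n = f(Q_n|P_n)\cdot \deg P_n$, so $d := \deg P_n$ divides $s$. Since $f(Q_n|P_n) \leq [E_n : F_n] = m$, we get $s/d \leq m$, i.e.\ $d \geq s/m$. These are exactly the two conditions on $d$ appearing in the sum.

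Next, for each $P_n \in \mathbb{P}(F_n)$ with $P_n \mid P$ and $\deg P_n = d$, the number of $Q_n \in \mathbb{P}(E_n)$ above $P_n$ with $\deg Q_n = s$ (equivalently $f(Q_n|P_n) = s/d$) is at most $m/(s/d) = md/s$, by the fundamental equality. Dropping the stronger requirement $Q_n\mid Q$ in favour of $Q_n\mid P$ only inflates the count, giving
\[ B_s(Q, E_n) \;\leq\; \sum_{\substack{d\mid s\\ d\geq s/m}} \frac{md}{s}\, B_d(P, F_n). \]
Dividing by $[F_n:F_0] = [E_n : E]$ (again by linear disjointness) and letting $n\to\infty$ yields the first inequality, once one observes that any $d \notin \mathcal{P}(\mathcal{F})$ contributes zero, since $\nu_d(\mathcal{F}) = \sum_{P'} \nu_d(P', \mathcal{F}) = 0$ forces $\nu_d(P,\mathcal{F}) = 0$; hence the sum may be restricted to $d \in \mathcal{P}(\mathcal{F})$.

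For the global inequality, one should not merely sum the local bound over all $Q\in\mathbb{P}(E)$, because that would introduce a spurious factor of $m$ (up to $m$ places $Q$ lie over each $P$). Instead I would redo the double-count directly at the level of $B_s(E_n)$: every degree-$s$ place $Q_n$ of $E_n$ contracts to some $P_n \in \mathbb{P}(F_n)$ with $d = \deg P_n$ satisfying $d\mid s$ and $d \geq s/m$, and each such $P_n$ has at most $md/s$ extensions of relative degree $s/d$. This gives
\[ B_s(E_n) \;\leq\; \sum_{\substack{d\mid s\\ d\geq s/m}} \frac{md}{s}\, B_d(F_n), \]
from which the second inequality follows after dividing by $[F_n:F_0]$ and taking the limit. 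The main subtlety, rather than any genuine obstacle, is keeping the normalizations straight and recognizing that the global bound needs its own direct counting argument instead of aggregating the local one.
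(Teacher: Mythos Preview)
Your proof is correct and follows essentially the same argument as the paper: bound $B_s(Q,E_n)$ by sorting the $Q_n$ according to $d=\deg(Q_n\cap F_n)$, use $f(Q_n|P_n)\leq m$ and the fundamental equality to get the constraints $d\mid s$, $d\geq s/m$ and the factor $md/s$, then divide by $[E_n:E]=[F_n:F]$ and pass to the limit. Your treatment of the global inequality is actually more careful than the paper's, which simply invokes the decomposition $\nu_s(\mathcal E)=\sum_Q\nu_s(Q,\mathcal E)$ without commenting on the potential overcount; your direct recount at the level of $B_s(E_n)$ (equivalently, summing the underlying place-count inequality over $P\in\mathbb P(F)$ rather than over $Q\in\mathbb P(E)$) is the right way to make that step precise.
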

\begin{proof} Let $Q_n$ be an extension  of $Q$ in $E_n$  of degree $s$ and $P_n:=Q_n\cap F_n$, for any $n\geq 1$. Then clearly $P_n|P$ and $\deg P_n=d$ with $d$ dividing $s$ and $d\geq \frac{s}{m}$, since $f(Q_n|P_n)\leq m$. Conversely, any place $P_n$ of $F_n$ lying above $P$ with $\deg P_n=d$, and satisfying $d\geq \frac{s}{m}$ has at most $\frac{md}{s}$ extensions of  degree $s$ in $E_n$, by using Fundamental Equality \cite{Stichtenoth}. Hence,
\begin{eqnarray}\label{limcomp}
             B_s(Q,E_n)\leq \sum_{\substack{d\in \N\\d|s,\;d\geq \frac{s}{m}} }\frac{md}{s} B_d(P,F_n)
\end{eqnarray}
           Then dividing by $[E_n:E]$ of both sides of (\ref{limcomp}) yields the bound for $\nu_s(Q,\mathcal{E})$. Then the desired bound for $\nu_s(\mathcal{E})$ follows, by using Theorem \ref{thmprem}(v). 
        
\end{proof}
\begin{cor}\label{corcomp} For the tower $\mathcal{E}$, we obtain that 
\begin{itemize}
           \item[(i)] $Supp(\mathcal{E})\subseteq \left\{ Q\in \mathbb{P}(E): \; Q\cap F\in Supp(\mathcal{F})\right\}$ and 
            \item[(ii)] if $\mathcal{P}(\mathcal{F})$ is finite, then $\mathcal{P}(\mathcal{E})$ is also finite.
\end{itemize}
\end{cor}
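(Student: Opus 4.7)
The plan is to derive both items directly from Proposition \ref{propcomp}, which is the main workhorse: it bounds each $\nu_s(Q,\mathcal{E})$ (and each $\nu_s(\mathcal{E})$) from above by a finite positive linear combination of the quantities $\nu_d(P,\mathcal{F})$ with $d\in\mathcal{P}(\mathcal{F})$, $d\mid s$, and $d\geq s/m$. Once that bound is in hand, both statements are read off by arguing contrapositively.

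For part (i), I would argue as follows. Suppose $Q\in Supp(\mathcal{E})$ and set $P:=Q\cap F$; by definition there is some $s\geq 1$ with $\nu_s(Q,\mathcal{E})>0$. Applying the first inequality of Proposition \ref{propcomp} gives
\[
0<\nu_s(Q,\mathcal{E})\leq \sum_{\substack{d\in \mathcal{P}(\mathcal{F})\\ d\mid s,\;d\geq s/m}}\frac{md}{s}\,\nu_d(P,\mathcal{F}),
\]
so at least one summand must be strictly positive. Hence there exists $d\in\mathcal{P}(\mathcal{F})$ with $\nu_d(P,\mathcal{F})>0$, which means $P\in Supp(\mathcal{F})$ as desired.

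For part (ii), assume $\mathcal{P}(\mathcal{F})$ is finite, and let $d^{*}:=\max\mathcal{P}(\mathcal{F})$. If $\nu_s(\mathcal{E})>0$ for some $s$, then the second inequality of Proposition \ref{propcomp} forces the sum on its right hand side to be nonempty, so there must exist $d\in\mathcal{P}(\mathcal{F})$ with $d\mid s$ and $d\geq s/m$. The latter condition gives $s\leq md\leq md^{*}$, so every element of $\mathcal{P}(\mathcal{E})$ is bounded above by $md^{*}$. In particular $\mathcal{P}(\mathcal{E})$ is finite; one can even refine this to the sharper bound $|\mathcal{P}(\mathcal{E})|\leq m\cdot|\mathcal{P}(\mathcal{F})|$, since for each fixed $d$ the multiples $s$ of $d$ satisfying $s\leq md$ are at most $m$ in number.

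I do not foresee any real obstacle here: the entire content of the corollary is already packaged in the inequalities of Proposition \ref{propcomp}, and the only thing to check is that $\mathcal{P}(\mathcal{F})$ being finite (together with the constraint $s\leq md$) forces $s$ to lie in a finite set. The one mild subtlety worth being careful about is to note that the upper bound in Proposition \ref{propcomp} is a sum indexed by $d\in\mathcal{P}(\mathcal{F})$ rather than by all divisors of $s$, so that positivity of the left hand side really produces a $d$ with $\nu_d(P,\mathcal{F})>0$ (rather than merely $\nu_d(P,\mathcal{F})\geq 0$); this is exactly what makes the contrapositive arguments above go through.
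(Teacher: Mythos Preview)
Your argument is correct and is precisely the intended one: the paper states Corollary~\ref{corcomp} without proof, treating it as an immediate consequence of Proposition~\ref{propcomp}, and your contrapositive reading of the two inequalities there is exactly how that deduction goes. The extra refinement $|\mathcal{P}(\mathcal{E})|\leq m\cdot|\mathcal{P}(\mathcal{F})|$ is a nice bonus but not needed.
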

          Notice that as  for a given integer $r>0$ there are finitely many places of degree dividing $r$, if $\mathcal{P}(\mathcal{F})$ is finite, then the set            $Supp(\mathcal{F})$ is also finite. Furthermore,  when $\gamma(\mathcal{F})<\infty$, by Theorem \ref{thmprem}(iv), for any $r \in                                    \mathcal{P}(\mathcal{F})$, we have $\beta_r(\mathcal{F})>0$, and moreover, by Theorem \ref{genus}, $\gamma(\mathcal{E})<\infty$, and hence                             $\beta_s(\mathcal{E})>0$ for all $s\in \mathcal{P}(\mathcal{E})$.

         We also note here that until now there are no known asymptotically exact sequences of global fields with infinite set of positive  parameters. 
\subsection {\normalsize{Construction of composite towers with prescribed  invariants}}
\paragraph{}
         Now we will give a method for constructing towers with certain prescribed invariants.  We say that a tower $\FF$ containing $F$ is \textit{pure}, if for all $P\in \Places(F)$ and $r\in \N$, the inequality $\nu_r(P,\mathcal{F})> 0$ implies $\deg P=r$ and $\nu_s(P,\mathcal{F})=0$ for all $s\neq r$. 
         In this part, we will prove our main result: 
         
\begin{thm}\label{mainthm} Let $\mathcal{F}/F$ be a tower over $\GF_q$ with a finite support and let $N\subset \N$ be a non empty finite set. Then there exists a finite separable extension $E/F$ such that $\EE:=E\cdot \FF$ is a composite tower with
\begin{itemize}
\item[(i)] for all $s\in \N$,
\begin{equation*}
\nu_s(\EE)=\sum_{\substack{f\in N\\ d\in \PP(\FF)}} \frac{f}{s}\sum_{\substack{P\in Supp(\FF)\\\;\; lcm(f\deg P, d)=s}} d\cdot \nu_d(P,\FF)
\end{equation*}
and 
\begin{equation} 
\label{stm-1} Supp(\EE)=\big\{ Q\in \Places(E): Q\cap F\in Supp(\FF)\big\},
\end{equation}
\[\textrm{\hspace{-0.6cm}}\PP(\EE)=\big\{s \in \N: s=lcm(f\deg P,d) \textrm{ with $f\in N$, $d\in \N$, $P\in Supp(\FF)$\big\}}.\]
\item[(ii)]  If furthermore $\FF/F$ is \textit{pure}, then  for all $s\in \N$,
\begin{equation*}
 \nu_s(\EE)=\sum_{\substack{f\in N,\; d\in \PP(\FF) \\ fd=s}} \nu_d(\FF) \qquad \textrm{ and }
\end{equation*}
\begin{equation*}
 \PP(\EE)=\big\{ s\in \N: s=fd \textrm{ with $f\in N$, $d\in \PP(\FF)$\big\}}.
\end{equation*}
\end{itemize}
\end{thm}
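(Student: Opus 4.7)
The strategy is to construct an explicit separable extension $E/F$ of degree $m:=\sum_{f\in N}f$ whose splitting at every $P\in Supp(\FF)$ is prescribed, and then to read off the invariants of $\EE$ from a local count at each such $P$. Since $Supp(\FF)=\{P_1,\ldots,P_t\}$ is finite, weak approximation (CRT applied coefficientwise in $F$) produces a monic $h\in F[T]$ of degree $m$ whose reduction modulo each $P_i$ factors in $k(P_i)[T]$ as $\prod_{f\in N}\bar h_{i,f}$ with the $\bar h_{i,f}$ distinct and irreducible of respective degrees $f\in N$. A further generic perturbation ensures that $h$ is irreducible over $F$ and that $E:=F[T]/(h)$ is linearly disjoint from every $F_n$ over $F$. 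Kummer's theorem then gives, for each $i$ and each $f\in N$, a unique unramified place $Q_{i,f}$ of $E$ above $P_i$ with $f(Q_{i,f}|P_i)=f$, so that $\deg Q_{i,f}=f\deg P_i$. Linear disjointness yields $[E_n:F_n]=m$, hence $E_n\subsetneq E_{n+1}$, and combined with $g(F_n)\to\infty$ and Riemann--Hurwitz this gives $g(E_n)\to\infty$, so $\EE$ is a composite tower.

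To count degree-$s$ places of $E_n$, fix $P=P_i$, $f\in N$, $Q=Q_{i,f}$, and a place $P_n\in\Places(F_n)$ lying over $P$ of degree $d$. The factor of $h$ modulo $P$ corresponding to $Q$ is irreducible of degree $f$ in $k(P)[T]=\GF_{q^{\deg P}}[T]$; over $k(P_n)=\GF_{q^d}$ (which contains $k(P)$, since $\deg P\mid d$) it decomposes into $\gcd(f,d/\deg P)$ irreducible pieces each of degree $f/\gcd(f,d/\deg P)$. Writing $s:=lcm(f\deg P,d)$, one checks $\gcd(f,d/\deg P)=fd/s$, so by Kummer's theorem and the unramifiedness of $E/F$ at $P$, there are exactly $fd/s$ places of $E_n$ lying over both $Q$ and $P_n$, each of degree $s$. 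Summing over all $P_n\mid P$ of each degree $d$ and dividing by $[E_n:E]=[F_n:F]$ gives
\[
\nu_s(Q,\EE)=\sum_{\substack{d\in\PP(\FF)\\ lcm(f\deg P,d)=s}}\frac{fd}{s}\,\nu_d(P,\FF).
\]
Summing these contributions over $P\in Supp(\FF)$ and $f\in N$ (every other $Q\in\Places(E)$ contributes $0$ by Corollary~\ref{corcomp}(i) and by construction) yields the formula of (i), and the descriptions of $Supp(\EE)$ and $\PP(\EE)$ are immediate.

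For part (ii), purity forces $\deg P=d$ whenever $\nu_d(P,\FF)>0$, hence $lcm(f\deg P,d)=lcm(fd,d)=fd$, and the inner sum in (i) over $P$ collapses to $d\cdot\nu_d(\FF)$; substituting gives $\nu_s(\EE)=\sum_{f\in N,\ d\in\PP(\FF),\ fd=s}\nu_d(\FF)$ and the stated description of $\PP(\EE)$. The main obstacle is the very first step: producing a single $h$ that realizes the prescribed local factorizations at every $P_i$, remains globally irreducible, and yields $E$ linearly disjoint from every $F_n$. The local prescriptions are routine (CRT combined with Hensel lifting), but global irreducibility and linear disjointness require either a Hilbert-irreducibility / density argument or---in keeping with the paper's emphasis on explicit extensions---a concrete construction tailored to $\FF$.
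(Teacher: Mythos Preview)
Your overall approach matches the paper's: construct $E/F$ with prescribed splitting at each $P\in Supp(\FF)$, then count degree-$s$ places of $E_n$ above each pair $(Q,P_n)$ via Kummer's theorem, obtaining the formula for $\nu_s(Q,\EE)$ which you then sum. The local counting argument you give (factoring $\bar h_{i,f}$ over $k(P_n)$ and identifying the number and degree of the factors via $\gcd$/$\operatorname{lcm}$) is exactly the content of the paper's Lemma~\ref{lem-1}(i) and Theorem~\ref{thmlocal}.

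The gap you yourself flag---producing $h$ that is globally irreducible and gives $E$ linearly disjoint from every $F_n$ with $\GF_q$ still the full constant field of each $E_n$---is precisely where the paper's argument differs from yours. The paper does not appeal to Hilbert irreducibility or a genericity argument. Instead (Proposition~\ref{crt} and Lemma~\ref{condition}) it fixes an auxiliary place $R\in\Places(F)\setminus Supp(\FF)$ and, in the same weak-approximation step, imposes Eisenstein-type conditions on the coefficients of $h$ at $R$. This simultaneously forces $h$ to be irreducible over $F$ (generalized Eisenstein) and $R$ to be totally ramified in $E$. Since $R$ is unramified in each $F_n$, Abhyankar's lemma then yields both the linear disjointness of $E/F$ and $F_n/F$ and that $\GF_q$ is algebraically closed in $E_n$. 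This is an entirely explicit construction, which is the point of the paper.

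One further technical point you should tighten: to invoke Kummer's theorem with equality in $E_n/F_n$ at $P_n$ (not merely the existence of places, but the exact count and the equality $f(Q'|P_n)=\deg$ of the factor), you need $\{1,y,\dots,y^{m-1}\}$ to be an integral basis of $E_n/F_n$ at $P_n$. The paper secures this by Proposition~\ref{crt}(iii) at $P$ together with the unramifiedness of $P$ in $E$, and carries it through in Lemma~\ref{lem-1}(i); you should make that step explicit rather than asserting ``by Kummer's theorem and the unramifiedness of $E/F$ at $P$''.
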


  To be able to prove Theorem \ref{mainthm}, we begin with some results which will be used to construct an appropriate extension $E/F$ such that $\mathcal{E}:=E\cdot \mathcal{F}$ is a composite tower over $\mathbb{F}_q$ with  certain properties.        
\begin{prop}\label{crt}
          Let $F/\mathbb{F}_q$ be a function field with a finite set $S\subseteq \mathbb{P}(F)$ of pairwise distinct places, and a place $R\in                              \mathbb{P}(F)\setminus S$.  For each $P\in S$ let $N_P\subset \mathbb{N}$ be a finite set such that $\sum_{f\in N_P} f$ are equal for all $P\in S$.  
          Then there is a finite separable extension $E$ of $F$ such that 
\begin{itemize}
        \item[(i)] $[E:F]=m$ where $m:=\sum_{f\in N_P} f$, and $R$ is totally ramified in $E$.
        \item[(ii)] For each $P\in S$, $f\in N$, there exists exactly one extension $Q$ of $P$ in $E/\mathbb{F}_q$ with $f(Q|P)=f$.
        \item[(iii)] There is $y\in E$ such that $E=F(y)$ and $\big\{1,y,\ldots,y^{m-1}\big\}$ is an integral basis for $E/F$ at all $P\in S$. 
\end{itemize}
\end{prop}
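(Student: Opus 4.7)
The plan is to construct $E$ as a simple extension $E=F(y)$, where $y$ is a root of a carefully chosen monic polynomial $\varphi(T)\in F[T]$ of degree $m$. The strategy is to prescribe the reduction of $\varphi$ at every place of $S\cup\{R\}$: separable with the right factorisation pattern at each $P\in S$, and Eisenstein at $R$. Then Kummer's theorem will give the splitting behaviour at $P\in S$, Eisenstein's criterion will force total ramification at $R$, and the separability of the local reductions will yield the integral basis.

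For each $P\in S$ and each $f\in N_P$, I first pick pairwise distinct monic irreducible polynomials $\varphi_{P,f}(T)\in k(P)[T]$ of degree $f$; this is possible since $k(P)$ is finite of cardinality at least $q$. Set
\[
\bar{\varphi}_P(T):=\prod_{f\in N_P}\varphi_{P,f}(T)\in k(P)[T],
\]
a monic separable polynomial of degree $m$. The target at $R$ is an Eisenstein polynomial, i.e.\ $\varphi\equiv T^m\pmod{R}$ with $v_R$-valuation exactly $1$ on the constant term. By weak approximation applied to the finite set of valuations $\{v_P:P\in S\}\cup\{v_R\}$ I produce, coefficient by coefficient, a monic polynomial
\[
\varphi(T)=T^m+a_{m-1}T^{m-1}+\cdots+a_0\in F[T]
\]
satisfying $a_i\equiv (\bar{\varphi}_P)_i\pmod{P}$ for every $P\in S$ and every $i$, together with $v_R(a_i)\ge 1$ for $0\le i\le m-1$ and $v_R(a_0)=1$. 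These conditions live at disjoint places, so they are compatible.

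Eisenstein at $R$ forces $\varphi$ to be irreducible over $F$, so $E:=F(y)$ with $\varphi(y)=0$ is a field extension of degree $m$ in which $R$ is totally ramified, giving (i). Separability of $\varphi$ (hence of $E/F$) follows because $\varphi\bmod P$ is separable for any $P\in S$, so $\varphi'\not\equiv 0$. Kummer's theorem (\cite[Thm.~3.3.7]{Stichtenoth}) applied at each $P\in S$ to the factorisation $\bar{\varphi}_P=\prod_{f\in N_P}\varphi_{P,f}$ into distinct irreducibles produces exactly one place $Q_{P,f}\mid P$ for each $f\in N_P$, with $e(Q_{P,f}\mid P)=1$ and $f(Q_{P,f}\mid P)=\deg \varphi_{P,f}=f$; this is (ii). For (iii), the separability of $\bar{\varphi}_P$ is equivalent to $\mathrm{disc}(1,y,\ldots,y^{m-1})$ being a unit at $P$, which by the standard discriminant criterion (cf.\ \cite[Cor.~3.5.11]{Stichtenoth}) is precisely the statement that $\{1,y,\ldots,y^{m-1}\}$ is an integral basis at $P$.

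The only real obstacle is the weak approximation step: one must simultaneously realise prescribed residues at the places of $S$ and an Eisenstein-type condition at $R$ while keeping the polynomial monic of exact degree $m$. This is routine once the problem is phrased coefficient-by-coefficient, but in positive characteristic the separability of $\varphi$ over $F$ needs the small extra observation above.
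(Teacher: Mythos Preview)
Your proof is correct and follows essentially the same strategy as the paper's: construct $\varphi(T)$ by weak approximation so that it reduces to $\prod_{f\in N_P} g_f(T)$ at each $P\in S$ and is Eisenstein at $R$, then invoke Eisenstein for irreducibility and total ramification, Kummer for the splitting pattern at each $P\in S$, and separability of the reduction for the integral basis. The only cosmetic differences are that the paper allows a slightly more general Eisenstein condition ($\gcd(m,v_R(b_0))=1$, possibly with negative valuation) whereas you take $v_R(a_0)=1$, and that you justify (iii) via the discriminant criterion rather than simply appealing to the squarefree factorisation over $k(P)$.
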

\begin{proof}  For each $P\in S$, we set 
               \[\varphi_P(T):= \prod\limits_{\substack{f\in N_P}}g_f(T)=\sum _{k=0}^{m}{a}_{kP}T^k\in \mathcal{O}_P[T],\]
              where $g_f\in \mathcal{O}_P[T]$ is a monic polynomial which is irreducible over $k(P)$ of $\deg g_f=f$.
              Then by the Weak Approximation Theorem \cite{Stichtenoth}, for each $k=0,\ldots,m$, there exist elements  $b_1,\ldots,b_m \in F$ such that 
\begin{itemize}
           \item $v_{P}(b_i-a_{iP})>0$  for all $i=1,\ldots,m-1$ and  $P\in S$, and
           \item $v_R(b_m)=0$,  $gcd(m,v_R(b_0))=1$ and either 
           \[v_R(b_i) \geq v_R(b_0)>0 \textrm{ for $i=1,\ldots,m-1$}\qquad \textrm{ or }\qquad \qquad\]
           \[v_R(b_0)<0, \;v_R(b_i)\geq 0\textrm{ for $i=1,\ldots,m-1$}.\qquad\qquad \qquad\]
\end{itemize}
             Note that w.l.o.g we can take $b_m:=1$. Now we set $\varphi(T):=\sum _{k=0}^{m} b_k T^k\in \bigcap_{P\in S}\mathcal{O}_{P}[T]$. Then 
             \[ \varphi(T)\equiv \varphi_P(T)\textrm{ over $k(P) $ for  $P\in S$,} \textrm{ and } \]
            by the generalized  Eisenstein's Irreducibility Criterion \cite{Stichtenoth} with the place $R$, the polynomial $\varphi(T)$ is  irreducible over $F$.
            Set $E:=F(y)$ where $y$ is a root of $\varphi(T)$. Hence, $[E:F]=m$ and by the same irreducibility criterion, $R$ is totally ramified in $E$, and so                assertion (i) follows. Then by applying Kummer's Theorem \cite{Stichtenoth}, (ii) follows. Note that $E/F$ is separable, since by Kummer's Theorem               each $P\in S$ is unramified in $E$.  Then (iii) is clear from the factorization of $\varphi(T)$ over $k(P)$.

\end{proof}
\begin{rem} In Proposition \ref{crt}, the elements in the set $N_P$ does not have to be distinct if for each $P\in S$ and $f\in N_P$, there are  monic polynomials            $g(T)\in \mathcal{O}_{P}[T]$ which are pairwise distinct and irreducible over $k(P)$ of $\deg g(T)=f$.   
\end{rem} 
\begin{lem} \label{condition}  Let $E/F$ and $F'/F$ be finite separable extensions of function fields in some algebraic closure of $F$. Suppose that                           $\mathbb{F}_q$ is algebraically closed in $F$ and $F'$, and there is a place $P$ of $F$ that is totally ramified in $E/F$ and unramified in $F'/F$.                Then $E/F$ and $F'/F$ are linearly disjoint and $\mathbb{F}_q$ is algebraically closed in $EF'$. 
\end{lem}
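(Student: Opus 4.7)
The plan is to reduce both assertions to a careful tracking of ramification indices above $P$ in the compositum $EF'$. First, I would pick any place $\tilde P$ of $EF'$ lying over $P$, and set $Q := \tilde P \cap E$ and $P' := \tilde P \cap F'$. Applying multiplicativity of the ramification index in the two subtowers $F \subset E \subset EF'$ and $F \subset F' \subset EF'$ yields
\[e(\tilde P\mid P) \;=\; e(\tilde P\mid Q)\,e(Q\mid P) \;=\; e(\tilde P\mid P')\,e(P'\mid P).\]
By hypothesis $e(Q\mid P)=[E:F]$ and $e(P'\mid P)=1$, so $e(\tilde P\mid P') = e(\tilde P\mid Q)\cdot [E:F] \ge [E:F]$.

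Next I would bound $e(\tilde P\mid P')$ from above by $[EF':F']$, and observe that $[EF':F']\le [E:F]$ (automatic from $EF'=F'(\alpha)$ for any primitive element $\alpha$ of $E/F$, whose minimal polynomial over $F'$ divides its minimal polynomial over $F$). Chaining these, $[E:F] \le e(\tilde P\mid P') \le [EF':F'] \le [E:F]$, so all four quantities coincide. The equality $[EF':F']=[E:F]$ is precisely linear disjointness of $E$ and $F'$ over $F$. A convenient byproduct, which powers the second half of the proof, is that $\tilde P$ is totally ramified over $P'$ in the extension $EF'/F'$.

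For the constant-field assertion, let $\mathbb{F}$ denote the full constant field of $EF'$; the goal is $\mathbb{F}=\mathbb{F}_q$. Since $\mathbb{F}_q$ is algebraically closed in $F'$, the compositum $F'\mathbb{F}\subseteq EF'$ is a constant field extension of $F'$ of degree $[\mathbb{F}:\mathbb{F}_q]$, in which every place of $F'$ is unramified. Let $P''$ be the place of $F'\mathbb{F}$ below $\tilde P$. Multiplicativity in $F'\subset F'\mathbb{F}\subset EF'$, together with $e(\tilde P\mid P')=[EF':F']$ and the trivial bounds $e(\tilde P\mid P'')\le[EF':F'\mathbb{F}]$ and $e(P''\mid P')\le[F'\mathbb{F}:F']$, forces both bounds to be equalities; in particular $e(P''\mid P')=[F'\mathbb{F}:F']$. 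But unramification of the constant field extension gives $e(P''\mid P')=1$, hence $[F'\mathbb{F}:F']=1$, i.e.\ $\mathbb{F}\subseteq F'$. Since $\mathbb{F}_q$ is algebraically closed in $F'$, we conclude $\mathbb{F}=\mathbb{F}_q$.

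The only content-bearing step is the opening ramification squeeze; everything else is routine bookkeeping. I do not foresee a substantive obstacle beyond being careful to apply the multiplicativity of $e$ in the correct subtower at each stage and to invoke the standard fact that constant field extensions of a function field are everywhere unramified.
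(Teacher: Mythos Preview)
Your proof is correct. The paper organises the argument a little differently: for linear disjointness it simply cites Abhyankar's Lemma (the unramified side makes it applicable), whereas you carry out the equivalent ramification-index squeeze by hand; for the constant-field assertion, the paper takes an arbitrary finite extension $L/\mathbb{F}_q$ and reapplies the first part to the pair $EF'/F'$ and $F'L/F'$ to conclude $EF'\cap F'L=F'$ and hence $EF'\cap L=\mathbb{F}_q$, while you instead work inside the tower $F'\subset F'\mathbb{F}\subset EF'$ and use the total ramification of $\tilde P$ over $P'$ to force $[F'\mathbb{F}:F']=1$ directly. Both routes rest on the same mechanism---a totally ramified place pinning down degrees---so the difference is cosmetic: your version is self-contained and avoids naming Abhyankar's Lemma, while the paper's iteration makes the second half a one-line corollary of the first.
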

\begin{proof} The linear disjointness follows from the existence of $P$ and Abhyankar's Lemma \cite{Stichtenoth}.  Let $L/\mathbb{F}_q$ be a finite extension of                $\mathbb{F}_q$. Then $P$ is unramified in the constant field extension $F'L$. Hence, again by applying Abhyankar's Lemma, we obtain that $EF'/F'$ and              $F'L/F'$ are linearly disjoint, and so  
             \[EF'\cap F'L=F'.\]
             This gives that $EF'\cap L=\mathbb{F}_q$, as $\mathbb{F}_q$ is algebraically closed in $F'$. Since this holds for any finite extension                             $L/\mathbb{F}_q$, we obtain that $\mathbb{F}_q$ is algebraically closed in $EF'$.
\end{proof}
\begin{lem} \label{lem-1} Let $F/\mathbb{F}_q$ be an algebraic function field and let $E$, $F'$ and $E'$ be finite separable extensions of $F$ such that $E'=EF'$. Suppose that $E/F$ and $F'/F$ are linearly disjoint.   
\begin{itemize}
              \item[(i)]  Set $E:=F(y)$, $m:=[E:F]$, and consider the set
              \[ M:=\big\{ P\in \Places(F): \{1,y,\ldots,y^{m-1}\} \textrm{ is an integral basis for $E/F$ at $P$}\big\}.\]
              Let $P\in M, P'\in \Places(F')$ with $P'|P$. Suppose that  $e(P'|P)$ is coprime to any ramification index of $P$ in $E$. Then  above $P'$ and each $Q \in \Places(E)$ with $Q|P$ there are exactly $gcd(f(Q|P),f(P'|P))$ places $Q'\in \Places(E')$, and moreover, for each such place $Q'$,
              \begin{equation}\label{relativedeg}
              k(Q')=k(Q)k(P').
              \end{equation} 
              \item[(ii)] Set $n:=[F':F]$. Then 
              \[g(E')\leq mg(F')+ng(E)-nmg(F)+(n-1)(m-1).\]
\end{itemize}
\end{lem}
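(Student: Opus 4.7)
For part (i), my plan is to apply Kummer's Theorem twice: once to $E/F$ at $P$ and once to $E'/F'$ at $P'$, using the common minimal polynomial $\varphi(T)\in\mathcal{O}_P[T]$ of $y$. The hypothesis gives at once the factorisation
\[
\bar\varphi(T)=\prod_{Q|P}\bar\varphi_Q(T)^{e(Q|P)}\in k(P)[T],
\]
with each $\bar\varphi_Q$ monic irreducible of degree $f(Q|P)$ and $k(Q)\cong k(P)[T]/(\bar\varphi_Q)$. The first substantive step is to verify that $\{1,y,\ldots,y^{m-1}\}$ remains an integral basis for $E'/F'$ at $P'$; here the assumption that $e(P'|P)$ is coprime to every $e(Q|P)$ feeds into Abhyankar's Lemma to pin down the ramification structure at $P'$ and to prevent the integral closure at $P'$ from enlarging. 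Once this is secured, Kummer's Theorem applied to $E'/F'$ at $P'$ identifies the places $Q'|P'$ with the monic irreducible factors of $\bar\varphi$ in $k(P')[T]$.

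The remainder of (i) is finite-field combinatorics: each irreducible factor $\bar\varphi_Q\in k(P)[T]$ factors over $k(P')$ into exactly $\gcd(f(Q|P),f(P'|P))$ monic irreducible pieces, because the tensor product $k(Q)\otimes_{k(P)}k(P')$ decomposes as a product of that many copies of the compositum $k(Q)k(P')$. Each piece yields a place $Q'$ of $E'$ lying above both $P'$ and $Q$, and reading the residue field off Kummer's Theorem gives $k(Q')\cong k(Q)k(P')$, which is exactly (\ref{relativedeg}). The principal obstacle in (i) is thus the persistence of the integral basis at $P'$; the coprime-ramification hypothesis is the lever that makes this work.

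For part (ii), I would begin by observing that the asserted bound is equivalent, via the Hurwitz genus formula applied successively to $E'/F'$, $E/F$ and $F'/F$, to the different inequality
\[
\deg\mathrm{Diff}(E'/F')\leq n\cdot\deg\mathrm{Diff}(E/F).
\]
Substituting $2g(E')-2=m(2g(F')-2)+\deg\mathrm{Diff}(E'/F')$ together with $\deg\mathrm{Diff}(E/F)=2g(E)-2-m(2g(F)-2)$ and rearranging converts one statement into the other. To prove the different inequality, the natural plan is to argue place-by-place: for each $Q'\in\mathbb{P}(E')$ with $Q:=Q'\cap E$, $P':=Q'\cap F'$ and $P:=Q'\cap F$, I would establish $d(Q'|P')\leq e(Q'|Q)\cdot d(Q|P)$ and then sum, using $\sum_{Q'|Q}e(Q'|Q)\deg Q'=n\deg Q$ to obtain the global bound.

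Using transitivity of the different in the two towers $Q'|Q|P$ and $Q'|P'|P$, this local inequality reduces to $d(Q'|Q)\leq e(Q'|P')\cdot d(P'|P)$, expressing that the different contribution arising from the base change $E'/E$ is controlled by that of $F'/F$. This is the main obstacle: it is immediate from Abhyankar's Lemma in the tame case, but in the wild case it requires either a filtration analysis of the higher ramification groups, or, alternatively, an appeal to a Castelnuovo-Severi type inequality incorporating the common subfield $F=E\cap F'$ (the latter equality being a direct consequence of the linear disjointness of $E/F$ and $F'/F$) to deduce (ii) as a black box, sidestepping the explicit different calculation.
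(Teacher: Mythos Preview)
Your outline for (i) diverges from the paper at the point you call ``the principal obstacle''. The paper does \emph{not} attempt to show that $\{1,y,\ldots,y^{m-1}\}$ remains an integral basis for $E'/F'$ at $P'$. Instead it applies only the weak form of Kummer's Theorem at $P'$ (which needs merely $\varphi(T)\in\mathcal O_{P'}[T]$, automatic since $\mathcal O_P\subseteq\mathcal O_{P'}$): this yields places $Q_{ij}$ with $f(Q_{ij}|P')\ge k_i$ but no equality and no completeness. Abhyankar then gives $e(Q_{ij}|P')=\epsilon_i$, and a straight comparison with the fundamental equality $[E':F']=\sum e(Q|P)f(Q|P)$ forces all the inequalities to be equalities and the list of places to be exhaustive. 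Your route would require controlling $\mathcal O_{P'}^{E'}/\mathcal O_{P'}[y]$, which amounts to matching discriminants and hence to knowing the different of $E'/F'$ at $P'$ in advance; this is essentially circular, and the paper's counting argument is the clean way around it.

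For (ii) you correctly reduce to the local inequality $d(Q'|P')\le e(Q'|Q)\,d(Q|P)$, but you are missing the idea that actually proves it. The paper passes to completions, picks $\alpha$ with $\mathcal O_{\hat Q}=\mathcal O_{\hat P}[\alpha]$, and lets $f(T)$ and $g(T)$ be the minimal polynomials of $\alpha$ over $\hat F$ and $\hat F'$ respectively. By Gauss' Lemma $f=gh$ in $\mathcal O_{\hat P'}[T]$, whence $f'(\alpha)=g'(\alpha)h(\alpha)$ and
\[
d(\hat Q'|\hat P')\le v_{\hat Q'}(g'(\alpha))\le v_{\hat Q'}(f'(\alpha))=e(\hat Q'|\hat Q)\,v_{\hat Q}(f'(\alpha))=e(\hat Q'|\hat Q)\,d(\hat Q|\hat P).
\]
This single line covers the wild case without any ramification-filtration analysis. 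Your proposed fallback to Castelnuovo--Severi does not suffice: the standard Castelnuovo inequality applied to $E'\supseteq E,F'$ gives only $g(E')\le mg(F')+ng(E)+(n-1)(m-1)$, which is weaker than the asserted bound by exactly $nm\,g(F)$ and therefore fails whenever $g(F)>0$ (the paper itself notes that the two bounds coincide only when $g(F)=0$).
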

\begin{proof} (i)  We first note that by \cite[Theorem  3.3.6]{Stichtenoth}, the set $M$  contains almost all places of $F$. Fix a place $P\in M$ with an extension $P'$ in $E'$ satisfying the given assumption. Let $\varphi(T)\in \OO_P[T]$ be the minimal polynomial of $y$ over $F$ and 
\begin{equation}
\Bar{\varphi}(T)=\prod_{i=1}^r \bar{g_i}(T)^{\epsilon_i}
\end{equation}
be the decomposition of $\bar{\varphi}(T)$ into irreducible factors over $k(P)$. Then by Kummer's Theorem \cite{Stichtenoth}, for $1\leq i\leq r$, there are places $Q_i\in \Places(E)$ satisfying 
\begin{equation}\label{kummer} 
Q_i|P,\; g_i(y)\in Q_i,\; e(Q_i|P)=\epsilon_i,\; f(Q_i|P)=\deg g_i,
\end{equation}
and these are all extensions of $P$ in $E$. For  each $1\leq i\leq r$, set
\begin{equation}\label{reldegeqn} 
k_i:=[k(Q_i)k(P'):k(P')].
\end{equation}
Then as $\bar{g}_i(T)$ is irreducible over $k(P)$, it is separable, and so 
\[ \bar{g}_i(T)^{\epsilon_i}=\prod_{j=1}^{s_i}\bar{h}_{ij}(T)^{\epsilon_i}\in  k(P')[T],\]
where $\bar{h}_{i1}(T),\ldots,\bar{h}_{is_i}(T)$ are pairwise distict, monic, irreducible polynomials in $k(P')[T]$ of $\deg \bar{h}_{ij}(T)=k_i$ for all $1\leq j\leq s_i$, and 
\begin{equation}\label{numberofplaces}
s_i=gcd(f(Q_i|P),f(P'|P)), 
\end{equation}
Again by Kummer's Theorem, for $1\leq j\leq s_i$, there are places $Q_{ij}\in \Places(E')$ satisfying 
\begin{equation}\label{eq-1}
Q_{ij}|P',\; h_{ij}(y)\in Q_{ij},\; f(Q_{ij}|P')\geq \deg h_{ij}=k_i.
\end{equation}
Moreover, as $h_{ij}(T)\mid g_i(T)$, it follows that each $Q_{ij}|Q_i$. We need to prove that these are all extensions of $Q_i$ and $P'$, then  the first part of assertion (i) follows, by (\ref{numberofplaces}).
Since by assumption $e(Q_i|P)$ and $e(P'|P)$ are coprime, it follows from Abhyankar's Lemma\cite{Stichtenoth} that 
\begin{equation} \label{eq-2}
e(Q_{ij}|P')=e(Q_i|P)=\epsilon_i \textrm{ for  all $1\leq j\leq s_i$.}
\end{equation}
 As this holds for all $1\leq i\leq r$, by using (\ref{kummer}), (\ref{eq-1}) and (\ref{eq-2}), we obtain that 
\begin{eqnarray*}
[E':F']=\sum_{\substack{Q\in \Places(E)\\ Q|P}}e(Q|P)f(Q|P)=\sum_{i=1}^{r}\epsilon_i\deg g_i(T)=\sum_{i=1}^r \epsilon_i\sum_{j=1}^{s_i} k_i\qquad\qquad\\
 \qquad\leq\;\sum_{i=1}^r\sum_{j=1}^{s_i} e(Q_{ij}|P')f(Q_{ij}|P')\leq\sum_{\substack{Q'\in \Places(E')\\Q'|P'}} e(Q'|P')f(Q'|P')=[E':F'].
\end{eqnarray*}
Hence, for each  $1\leq i\leq r$,  we get 
\begin{equation} \label{reldegeq}
 f(Q_{ij}|P')=k_i \textrm{ for $1\leq j\leq s_i$,}
\end{equation}
and $Q_{i1},\ldots Q_{is_i}$ are all places of $E'$ lying over $Q_i$ and $P'$. Then (\ref{relativedeg}) is clear, by (\ref{reldegeq}).\\
(ii)  We first claim that  $d(Q'|P')\leq e(Q'|Q)d(Q|P)$ for any $Q'\in \Places(E')$, $P':=Q'\cap F'$, $Q:=Q'\cap E$ and $P:=Q'\cap F$. Using this claim, the proof of (ii) will follow: 
 \begin{eqnarray*}
 Diff(E'/F')&=&\sum_{P'\in \Places(F')}\sum_{\substack{Q'|P'}} d(Q'|P')Q'\leq \sum_{\substack{Q\in \Places(E)\\ P=Q\cap F}}\sum_{\substack{ Q'|Q}} e(Q'|Q)d(Q|P)Q'\\
           &=&Con_{E'/E}(Diff(E/F)),
\end{eqnarray*}
where $Con_{E'/E}$ is the conorm map and $Diff$ is the the different. Hence, by \cite[Corollary 3.1.14]{Stichtenoth},
\begin{equation} \label{conorm}
\deg (Diff(E'/F'))\leq [E':E]\deg(Diff(E/F)).
\end{equation}
 By using (\ref{conorm}) and the Hurwitz Genus Formula for the extensions $E'/F'$ and $E/F$, we get 
 \begin{eqnarray*}
2g(E')-2&\leq& m(2g(F')-2)+n\deg(Diff(E/F))\\
        &=&m(2g(F')-2)+n\big(2g(E)-2-m(2g(F)-2)\big),
\end{eqnarray*}        
from which (ii) follows.

 Now to prove the claim,  consider the completions $\hat{F},\hat{E},\hat{F'}$ and $\hat{E'}$ with respect to the places $P,Q,P'$ and $Q'$. Since the different is preserved by completion, see \cite[p.52, Proposition 10]{Serre}, it sufficies to prove it in the comleted setting.  By \cite[p.57, Proposition 12]{Serre}, there is $\alpha \in \hat{E}$ such that $\mathcal{O}_{\hat{Q}}=\mathcal{O}_{\hat{P}}[\alpha]$.  Let $f(T)\in \mathcal{O}_{\hat{P}}[T]$, (resp. $g(T)\in \mathcal{O}_{\hat{P'}}[T]$)  be the minimal polynomial of $\alpha$ over $\hat{F}$ (resp. over $\hat{F'}$). By Gauss Lemma \cite{Hungerford}, we can write $f(T)=g(T)h(T)$ in $\mathcal{O}_{\hat{P'}}[T]$, then 
 \[ f'(\alpha)=g'(\alpha)h(\alpha). \]
Thus, by using \cite[p.56, Corollary 2]{Serre}, the desired result follows:
 \[d(\hat{Q'}|\hat{P'})\leq v_{\hat{Q'}}(g'(\alpha))\leq v_{\hat{Q'}}(f'(\alpha))=e(\hat{Q'}|\hat{Q})v_{\hat{Q}}(f'(\alpha))=e(\hat{Q'}|\hat{Q})d(\hat{Q}|\hat{P}).\]
\end{proof}
\begin{thm} \label{thmlocal} With the same notations as in Lemma \ref{lem-1}, suppose that $\EE:=E\cdot \FF$ is a composite tower of $\FF/\mathbb{F}_q$. Let  $P\in M$ such that $e(Q|P)$ is coprime to any ramification index of $P$ in $\FF$, for all $Q\in \Places(E)$ with $Q|P$. Then for any $Q|P$ and $s\geq 1$, 
\[\nu_s(Q,\EE)=\frac{f(Q|P)}{s}\sum_{\substack{d\in \N \\ lcm(\deg Q,d)=s}} d\cdot \nu_d(P,\FF).\]
\end{thm}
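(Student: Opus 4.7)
The plan is to apply Lemma~\ref{lem-1}(i) at every finite level $n$ with $F'=F_n$ and $E'=EF_n=E_n$, then divide by $[E_n:E]$ and pass to the limit. Since $\EE$ is a composite tower, $E/F$ and $F_n/F$ are linearly disjoint for all $n$, so $[E_n:E]=[F_n:F]$. The condition $P\in M$ depends only on the base extension $E/F$ and therefore holds uniformly in $n$. The coprimality hypothesis---that $e(Q|P)$ is coprime to every ramification index of $P$ in $\FF$, for all $Q|P$ in $E$---is exactly what Lemma~\ref{lem-1}(i) requires when $F'=F_n$, since the ramification indices of $P$ in $F_n/F$ are among the ramification indices of $P$ in $\FF$.

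Granted the lemma, for each $P_n\in\Places(F_n)$ above $P$ with $\deg P_n=d$, there are exactly $\gcd(f(Q|P),f(P_n|P))$ places $Q_n$ of $E_n$ lying above both $Q$ and $P_n$, and each such $Q_n$ satisfies $k(Q_n)=k(Q)k(P_n)$, hence $\deg Q_n = lcm(\deg Q, d)$. Since every $Q_n\in\Places(E_n)$ above $Q$ lies over a unique $P_n$ in $F_n$ above $P$ (namely $Q_n\cap F_n$), summing over $P_n$ yields
\begin{equation*}
B_s(Q,E_n) \;=\; \sum_{\substack{d\in\N\\ lcm(\deg Q,\,d)=s}} \gcd(f(Q|P),f(P_n|P))\cdot B_d(P,F_n).
\end{equation*}
Writing $f(Q|P)=\deg Q/\deg P$ and $f(P_n|P)=d/\deg P$, a short $\gcd$-$lcm$ identity gives $\gcd(f(Q|P),f(P_n|P)) = \gcd(\deg Q,d)/\deg P = (\deg Q\cdot d)/(s\cdot\deg P) = f(Q|P)\cdot d/s$ whenever $lcm(\deg Q,d)=s$. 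Dividing by $[E_n:E]=[F_n:F]$ and letting $n\to\infty$ then produces the claimed formula, since only the finitely many $d\mid s$ can contribute and each term has a limit by the definition of $\nu_d(P,\FF)$.

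The main obstacle is verifying that Lemma~\ref{lem-1}(i) applies uniformly at every level and for every place of $F_n$ above $P$. Both the integral-basis hypothesis $P\in M$ (a statement about the fixed extension $E/F$) and the coprimality hypothesis (assumed against \emph{all} ramification indices of $P$ in $\FF$) are tailored exactly to this, so the application is immediate. The rest of the argument consists of the $\gcd$-$lcm$ simplification above together with the limit definitions of $\nu_s(Q,\EE)$ and $\nu_d(P,\FF)$; no convergence subtleties arise because the relevant sum is finite.
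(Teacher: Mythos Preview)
Your proof is correct and follows essentially the same approach as the paper: apply Lemma~\ref{lem-1}(i) at each level $F'=F_n$, group the places $Q_n$ above $Q$ according to $P_n=Q_n\cap F_n$, simplify the count via a $\gcd$--$\mathrm{lcm}$ identity, and pass to the limit. The only cosmetic difference is that the paper carries out the count first in \emph{relative} degrees over $P$ and then invokes $\mathrm{lcm}(af,ad)=as\iff\mathrm{lcm}(f,d)=s$ to pass to absolute degrees, whereas you work with absolute degrees throughout and do the equivalent simplification $\gcd(f(Q|P),f(P_n|P))=f(Q|P)\cdot d/s$ directly; also note that in your displayed formula the coefficient $\gcd(f(Q|P),f(P_n|P))$ depends only on $d$ (since $f(P_n|P)=d/\deg P$), so the notation is unambiguous.
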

\begin{proof} Set $E':=E_n$, $F':=F_n$, for any $n\geq 1$. By Lemma \ref{lem-1}(i), there are $gcd(f(Q|P),f(P'|P))$ places $Q'\in \Places(E')$ above any fixed $Q,P'$ lying over $P\in M$, and moreover for each such place $Q'$,
\[ k(Q')=k(Q)k(P').\]
In particular, $s:=f(Q'|P)=lcm((f(Q|P),f(P'|P))$, and so $d:=f(P'|P)$ divides $s$. Conversely, for any $P'|P$ with $d=f(P'|P)$ such that $s=lcm(f(Q|P),d)$ has an extension $Q'$ in $E'$ with $f(Q'|P)=s$. Hence,
\begin{eqnarray*}
\sum_{\substack{Q'|Q\\f(Q'|P)=s}} 1&=&\sum_{\substack{d\in \N\\ lcm(f(Q|P),d)=s}}\sum_{\substack{P'|P\\\; f(P'|P)=d}} \;\sum_{\substack{Q'|P'\\Q'|Q}}1\\
                                 &=&\sum_{\substack{d\in \N\\ lcm(f(Q|P),d))=s}} \sum_{\substack{P'|P\\\;f(P'|P)=d}} gcd(f(Q|P),d)\\
                                 &=& \sum_{\substack{ d\in \N\\ lcm(f(Q|P),d)=s}} \frac{d\cdot f(Q|P)}{s}\sum_{\substack{P'|P\\ f(P'|P)=d}} 1.
\end{eqnarray*} 
Since in general $lcm(af,ad)=as$ if and only if $lcm(f,d)=s$, we can write the summation indices in terms of absolute degrees instead of relative degrees with respect to $P$ as base place, and obtain 
\begin{eqnarray*}
B_s(Q,E')&=&\sum_{\substack{Q'|Q\\ \deg Q'=s}} 1= \frac{f(Q|P)}{s}\sum_{\substack{d\in \N\\ lcm(\deg Q,d)=s}}d\sum_{\substack{P'|P\\\;\deg P'=d}} 1\\
         &=&\frac{f(Q|P)}{s} \sum_{\substack{d\in \N\\ lcm(\deg Q,d)=s}} d\cdot B_d(P,F').
\end{eqnarray*}
Dividing by $[E':E]$ and then taking the limit as $n\rightarrow \infty$ proves the theorem.
\end{proof}
\begin{proof}[Proof of Theorem \ref{mainthm}] It is enough to prove (i), then (ii) is immediate.
 By applying Lemma \ref{condition} and Proposition \ref{crt} with the set $S:=Supp(\FF)$ and $N_P=N$ for each $P\in Supp(\FF)$, one can construct an extension $E/F$ such that $\EE=E\cdot \FF$ is a composite tower of $\FF$ and for each $f\in N$, any $P\in S$ has exactly one extension $Q$ in $E$ with $f(Q|P)=f$ and these are the only extensions of $P$ in $E$. Moreover, by the construction of $E/F$, all places $P\in S$ are unramified in $E$ and $S$ is contained in the set $M$ defined in Lemma \ref{lem-1}.
 
By Corollary \ref{corcomp} and the construction of $E/F$, and Theorem \ref{thmlocal}, the statement (\ref{stm-1}) is immediate. Therefore,  for any $s\geq 1$, by using Theorems \ref{thmlocal} and \ref{thmprem}(v), we get 
\begin{eqnarray*}
\nu_s(\EE)&=&\sum_{\substack{f\in N, d\in \N \\ P\in S}} \sum_{\substack{ f(Q|P)=f\\ lcm(\deg Q,d)=s}} \nu_s(Q,\EE)\\
          &=& \sum_{\substack{f\in N\\ d\in \N}} \frac{f}{s}\sum_{\substack {P\in S \\ lcm(f\deg P,d)=s}} d\cdot \nu_d(P,\FF).
\end{eqnarray*} 
\end{proof}

          We note here that in the case that $Supp(\mathcal{F})$ is infinite, one can apply Theorem \ref{mainthm} with a finite subset $S\subseteq                          Supp(\mathcal{F})$ and gets a finite subset of $Supp(\mathcal{E})$. 

        As there are many towers over a given finite field $\mathbb{F}_q$ with non empty finite support, such as many of the class field towers and the recursive            towers (see Section 4), Theorem \ref{mainthm} can be often applied. More specifically, since there are many towers $\mathcal{F}$  over $\mathbb{F}_{q^2}$ attaining  the Drinfeld-Vladut bound of order one, i.e., $\mathcal{P}(\mathcal{F})=\left\{1\right\}$ (see Example \ref{exm}), by using Theorem \ref{mainthm}(ii) one gets immediately the following consequence:
\begin{cor}\label{maincor} For any given set $M\subset \mathbb{N}$, there exists a tower of function fields $\mathcal{E}$ over $\mathbb{F}_{q^2}$ with 
            \[ \mathcal{P}(\mathcal{E})\cap M=\emptyset.\]
\end{cor}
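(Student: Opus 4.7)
The plan is to reduce the corollary to a direct application of Theorem~\ref{mainthm}(ii) to a tower $\FF$ over $\GF_{q^2}$ that already has $\PP(\FF)=\{1\}$. Such towers are available for every square $q^2$ from Example~\ref{exm}: the standard Garcia--Stichtenoth-type constructions attain the Drinfeld--Vladut bound of order one, so $\beta_1(\FF)=q-1$.

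First I would verify that any such optimal tower $\FF$ is \emph{pure} in the sense introduced before Theorem~\ref{mainthm} and that its set of positive parameters is precisely $\PP(\FF)=\{1\}$. The generalized Drinfeld--Vladut inequality in Theorem~\ref{thmprem}(ii) is already saturated by the single term with $r=1$, so $\beta_r(\FF)=0$ for all $r\geq 2$. Since $\gamma(\FF)<\infty$ (the tower being optimal), Theorem~\ref{thmprem}(iv) converts this into $\nu_r(\FF)=0$ for $r\geq 2$, and Theorem~\ref{thmprem}(v), together with $\nu_r(P,\FF)\geq 0$, then forces $\nu_r(P,\FF)=0$ for all $P\in\Places(F)$ and all $r\geq 2$. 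On the other hand, if $\nu_1(P,\FF)>0$ then for large $n$ the place $P$ must admit at least one degree-one extension in $F_n$, which forces $\deg P\mid 1$, i.e.\ $\deg P=1$. This confirms that $\FF$ is pure and $\PP(\FF)=\{1\}$.

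Next, since $M$ is a proper subset of $\N$ (otherwise the claim is vacuous), I would pick any $f_0\in\N\setminus M$ and set $N:=\{f_0\}$. Applying Theorem~\ref{mainthm}(ii) to $\FF$ and this $N$ produces a composite tower $\EE=E\cdot\FF$ over $\GF_{q^2}$ with
$$\PP(\EE)=\{fd:f\in N,\ d\in\PP(\FF)\}=\{f_0\cdot 1\}=\{f_0\},$$
so in particular $\PP(\EE)\cap M=\emptyset$, as required.

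The only step that is not a mechanical invocation of the main theorem is the purity verification for the optimal starting tower. This is not a serious obstacle: it is immediate once one combines the saturation of Drinfeld--Vladut with the local--global decomposition of Theorem~\ref{thmprem}(iv)--(v). Everything else is just choosing $N$ disjoint from $M$ and quoting Theorem~\ref{mainthm}(ii).
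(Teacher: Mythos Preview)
Your proposal is correct and follows essentially the same approach as the paper: start from an optimal tower $\FF/\GF_{q^2}$ with $\PP(\FF)=\{1\}$ (as in Example~\ref{exm}), pick a finite $N\subseteq \N\setminus M$, and invoke Theorem~\ref{mainthm}(ii) to obtain $\PP(\EE)=N$. Your explicit verification of purity (and, implicitly, of finite support) is a nice addition that the paper leaves to the context of Example~\ref{exm}, and your choice of a singleton $N=\{f_0\}$ is just the simplest instance of the paper's ``any finite $N\subseteq\N\setminus M$''.
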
 
\begin{proof}  Let $N\subseteq \mathbb{N}\setminus M$ be a finite set. Consider a tower $\mathcal{F}$ over $\mathbb{F}_{q^2}$ with $\mathcal{P}(\mathcal{F})=\left\{1 \right\}$. Then  by  Theorem \ref{mainthm}(ii), there is a composite tower $\mathcal{E}$ of $\mathcal{F}$ over $\mathbb{F}_{q^2}$  with 
              \[\mathcal{P}(\mathcal{E})=N,\]
 and hence the corollary follows.
\end{proof} 

\subsection{\normalsize{Computation of the genus of a composite tower and an aplication}}
\paragraph{}

In this part, we assume that $\mathcal{E}:=E\cdot \mathcal{F}$ is a composite tower over $\mathbb{F}_q$ with $E/F$ . We begin with the computation of the genus $\gamma(\mathcal{E})$ of the tower $\mathcal{E}$, under certain conditions. We first note that for a tower $\mathcal{F}=(F_n)_{n\geq 0}$ over $\mathbb{F}_q$ if the set 
\[R:=\left\{ P\in \mathbb{P}(F): \textrm{ $P$ is ramified in $F_n$ for some $n\geq 1$}\right\}\]
 is finite, then by \cite[Lemma 3.4]{GST097}, the following limit exists:
 \[ \alpha(\mathcal{F}):=\lim_{n\rightarrow \infty} \frac{\deg A_n}{[F_n:F]}\quad \textrm{ where } A_n:=\sum_{\substack{P\in \mathbb{P}(F_n)\\
 P\cap F\in R}} P.\]
\begin{thm}\label{genus} Set $m:=[E:F]$. For the genus $\gamma(\mathcal{E})$ the following hold:
\begin{itemize}
\item[(i)] $m\gamma(\mathcal{F})\leq \gamma(\mathcal{E})\leq g(E)-1+m(1-g(F)+\gamma(\mathcal{F}))$. 

If furthermore all $P\in R$ are unramified in $E$, then the second equality holds. 
\item[(ii)] If $R$ is finite, $\alpha(\mathcal{F})=0$ and all $P\in R$ are tame in $E$, then
\[ \gamma(\mathcal{E})=g(E)-1-\delta/2+m(1-g(F)+\gamma(\mathcal{F})),\]
where $\delta:=\sum\limits_{\substack{Q \in \mathbb{P}(E)\\ Q\cap F\in R}}d(Q|Q\cap F)\cdot \deg Q.$
 \end{itemize}
 \end{thm}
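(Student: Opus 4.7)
The plan is to apply the Hurwitz genus formula to each extension $E_n/F_n$, which has degree $m$ by the linear disjointness of $E/F$ and $F_n/F$ (so $[E_n:E] = [F_n:F]$), then divide by $[F_n:F]$ and pass to the limit. Since $g(E_n)/[F_n:F] = g(E_n)/[E_n:E] \to \gamma(\EE)$ and $g(F_n)/[F_n:F] \to \gamma(\FF)$, everything reduces to controlling $\deg(\mathrm{Diff}(E_n/F_n))/[F_n:F]$ asymptotically.

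For part (i), the upper bound follows at once from Lemma \ref{lem-1}(ii) applied with $F' := F_n$: dividing by $[F_n:F]$ and letting $n \to \infty$ yields $\gamma(\EE) \leq g(E) - 1 + m(1 - g(F) + \gamma(\FF))$. The lower bound uses $\deg(\mathrm{Diff}(E_n/F_n)) \geq 0$ in Hurwitz, which gives $g(E_n) \geq mg(F_n) - m + 1$ and hence $\gamma(\EE) \geq m\gamma(\FF)$. For the equality case under the hypothesis that every $P \in R$ is unramified in $E$, the goal is to show $\deg(\mathrm{Diff}(E_n/F_n)) = [F_n:F] \cdot \deg(\mathrm{Diff}(E/F))$ for every $n$, by a place-by-place analysis over $\Places(F)$. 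For $P \in R$, the assumption gives $e(Q|P) = 1$ for every $Q|P$ in $E$, and Abhyankar's Lemma forces $e(Q_n|P_n) = 1$ above $P$, so both sides contribute zero. For $P \notin R$, $P$ is unramified in $F_n$, and the local base-change identity $d(Q_n|P_n) = d(Q|P)$ (where $Q := Q_n \cap E$), obtained by completion exactly as in the proof of Lemma \ref{lem-1}(ii), together with $\sum_{Q_n|Q} f(Q_n|Q) = [E_n:E]$ (since $Q$ is unramified in $E_n$), yields the contribution $[E_n:E] \sum_{Q|P} d(Q|P)\deg Q$. Summing over $P$ gives the claim, and substituting into Hurwitz produces the stated equality.

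For part (ii), I split $D_n := \deg(\mathrm{Diff}(E_n/F_n)) = D_n^{R^c} + D_n^R$ according to whether the underlying place of $F$ lies in $R$ or not. The argument of the equality case in (i) adapts verbatim to give $D_n^{R^c} = [F_n:F]\bigl(\deg(\mathrm{Diff}(E/F)) - \delta\bigr)$. For $D_n^R$, the tameness of every $P \in R$ in $E$ combined with Abhyankar's Lemma forces tameness of $E_n/F_n$ at all $Q_n | P_n$ with $P_n | P \in R$, so $d(Q_n|P_n) = e(Q_n|P_n) - 1$. Using this and the fundamental equality $\sum_{Q_n|P_n} e(Q_n|P_n) f(Q_n|P_n) = m$ together with $\deg Q_n = f(Q_n|P_n)\deg P_n$ gives $D_n^R = m \deg A_n - \deg B_n$, where $B_n := \sum_{Q_n \cap F \in R} Q_n$. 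The obvious bounds $0 \leq D_n^R \leq m \deg A_n$ combined with the hypothesis $\alpha(\FF) = 0$ force $D_n^R/[F_n:F] \to 0$. Plugging back into Hurwitz, dividing by $2[F_n:F]$, taking the limit, and substituting $\deg(\mathrm{Diff}(E/F)) = 2g(E) - 2 - m(2g(F) - 2)$ produces the formula of (ii).

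The main obstacle is the local base-change identity $d(Q_n|P_n) = d(Q|P)$ at places $P \notin R$: this is the only step that is not purely combinatorial and requires a completion/integral-basis argument of the type already used in the proof of Lemma \ref{lem-1}(ii). Once this is granted, the $R^c$-contribution is computed cleanly, and the tame bookkeeping in (ii) reduces to the identity $D_n^R = m\deg A_n - \deg B_n$, whose vanishing after division by $[F_n:F]$ follows directly from the hypothesis $\alpha(\FF) = 0$.
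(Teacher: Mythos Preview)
Your proof is correct and follows the same line as the paper: the inequalities in (i) are obtained exactly as you describe (Hurwitz for the lower bound, Lemma \ref{lem-1}(ii) with $F'=F_n$ for the upper bound), while for (ii) and the equality case of (i) the paper simply refers to \cite[Theorem 3.6]{GST097}, whose proof is precisely the different-splitting argument you wrote out. One minor remark: the identity $d(Q_n|P_n)=d(Q|P)$ for $P\notin R$ that you flag as the ``main obstacle'' is in fact immediate from transitivity of the different, since $d(P_n|P)=0$ and (by Abhyankar) $e(Q_n|Q)=1$, hence $d(Q_n|Q)=0$; no completion argument is needed there.
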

 \begin{proof} The assertion (ii) is given in \cite[Theorem 3.6]{GST097}. The proof of the second part of (i) is with minor modifications the same as that of (ii).
 Hence, we need just to prove the first part of (i).  By using the Hurwitz Genus Formula \cite{Stichtenoth}, one can easily conclude that
\begin{equation*}
                   g(E_n)\geq mg(F_n)-m \textrm{ for all $n\geq 0$, and so $\gamma(\mathcal{E})\geq  m\gamma(\mathcal{F})$.}
\end{equation*} 
Next, to prove the second inequality  we apply Lemma \ref{lem-1}(ii), with $E':=E_n$, $F':=F_n$, for any $n\geq 1$, which yields
\[g(E')\leq mg(F')+[F':F]g(E)-m[F':F]g(F)+([F':F]-1)(m-1).\]
Dividing by $[E':E]=[F':F]$ and then taking the limit as $n\rightarrow \infty$ gives the assertion. 
\end{proof}
\begin{rem} In the second part of Theorem \ref{genus}(i), when $g(F)=0$, Castelnouvo's Inequality \cite{Stichtenoth} holds for the function fields $E/\mathbb{F}_q, F_n/\mathbb{F}_q$ with their compositum $E_n$. 
\end{rem}
\begin{exm}\label{exm} Let $N\subset \mathbb{N}$ be a finite set and set $m:=\sum_{f\in N}f$. Consider the tower $\mathcal{F}=(F_n)_{n\geq 0}$ over $\mathbb{F}_{q^2}$, with $(m,q)=1$, which is studied in \cite{GS095}.  The tower $\mathcal{F}$ is defined by $F=\mathbb{F}_{q^2}(x_0)$, and $F_{n+1}=F_n(x_{n+1})$, where $x_{n+1}$ satisfies the equation
\[x_{n+1}^qx_{n}^{q-1}+x_{n+1}=x_n^q.\]
This tower has the following properties:
\begin{itemize}
\item $Supp(\mathcal{F})=\{ P\in \mathbb{P}(F)|\textrm{ $x_0(P)=\alpha$ for some $0\neq \alpha \in \mathbb{F}_{q^2}$} \}$,
\[ \mathcal{P}(\mathcal{F})= \left\{ 1\right\} \textrm{ and } \nu_1(P,\mathcal{F})=1 \textrm{ for all $P \in Supp(\mathcal{F})$.}\qquad\qquad \] 
\item $R:=\left\{P_0,P_\infty \right\}\subseteq\mathbb{P}(F)$, where $P_0$ (resp. $P_\infty$) is the zero (resp. the pole) of $x_0$, is the set of ramified places in $\mathcal{F}$. 
\item $P_\infty$ is totally ramified in $\mathcal{F}$, and $\gamma(\mathcal{F})=q+1$.
\item $\beta_1(\mathcal{F})=q-1$, i.e., $\mathcal{F}$ attains the Drinfeld-Vladut bound of order one.
\item $\alpha(\mathcal{F})=0$, see \cite[Lemma 2.9]{GS095} or \cite[ Example 3.8(v)]{GST097}.
\end{itemize}
Let $E:=F(y)$ with $y$ a root of the polynomial 
 \[\varphi(T):=\prod_{f\in N}g_f(T)-x_0^{q^2}+x_0\in F[T],\]
where $g_f\in \mathbb{F}_{q^2}[T]$ is a monic, irreducible polynomial of $\deg g_f(T)=f$. Then the following hold: 
\begin{itemize}
\item[(i)] $P_\infty$ is clearly totally ramified and by Kummer's Theorem $P_0$ is unramified in $E$. Thus, $E/F$ is a separable extension of degree $[E:F]=m$.
\item[(ii)] Since $[F_{n+1}:F_n]=q$ and $(m,q)=1$, by using  Abhyankar's Lemma \cite{Stichtenoth}, we obtain that $P_\infty$ is totally ramified in $E_n:=EF_n$ for all $n\geq 1$. Thus, $E/F$ and $F_n/F$ are linearly disjoint and $\mathbb{F}_{q^2}$ is algebraically closed in $E_n$. Hence, $\mathcal{E}:=E\cdot \mathcal{F}$ is a tower over $\mathbb{F}_{q^2}$. 
\item[(iii)] $E/\mathbb{F}_{q^2}(y)$ is an elementary abelian extension. Then one can easily conclude that only the pole of $y$, say $Q_\infty$, is ramified in $E/\mathbb{F}_{q^2}(y)$, with the extension $Q'_\infty$. Moreover, $e(Q'_\infty|Q_\infty)=q^2$ and $d(Q'_\infty|Q_\infty)=(m+1)(q^2-1)$. Now it follows from the Hurwitz Genus Formula\cite{Stichtenoth} for the extension $E/\mathbb{F}_{q^2}(y)$ that the genus of $E$ is
\[g(E)=\frac{(m-1)(q^2-1)}{2}.\]
\item[(iv)] By Kummer's Theorem, each place $P\in Supp(\mathcal{F})$ is unramified in $E$ and has exactly one extension of degree $f$ in $E$, for each $f\in N$. \item [(v)] All places $P\in Supp(\mathcal{F})$ split completely in $\mathcal{F}$, as $\nu_1(P,\mathcal{F})=1$. 
\end{itemize}
By combining (i), (ii), (iii) and applying Theorem \ref{genus}(ii),  we obtain that
\begin{equation}\label{newgenus}
\gamma(\mathcal{E})=\frac{m(q^2+2q+2)-q^2}{2}.
\end{equation}
Now since the extension $E/F$ has all properties in Proposition \ref{crt}, by using Theorems \ref{thmlocal} and \ref{mainthm}(ii),  we obtain that 
\[ Supp(\mathcal{E})=\left\{Q\in \mathbb{P}(E):\; Q\cap F\in Supp(\mathcal{F}) \right\}, \; \mathcal{P}(\mathcal{E})=N,\textrm{ and }\]
\[ \nu_f(Q,\mathcal{F})=1\textrm{ for all $Q\in Supp(\mathcal{E})$ with some $f\in N$, and so  $\nu_f(\mathcal{E})=q^2-1$}.\]
Then by Theorem \ref{thmprem}(iv) and (\ref{newgenus}), we get
\[ \beta_f(\mathcal{E})=\frac{2(q^2-1)}{m(q^2+2q+2)-q^2} \textrm{ for all $f\in N$. } \]
\end{exm}
\begin{rem} In Example \ref{exm}, the \textit{deficiency}
\[\delta(\mathcal{E})=1- \frac{2(q^2-1)}{m(q^2+2q+2)-q^2}\sum_{f \in N} \frac{f}{q^{f}-1},\]
which depends on $m,q$ and the set $N$. Thus, by  an appropriate choice of $m,q$ and $N$, one can construct many different towers $\mathcal{E}/\mathbb{F}_{q^2}$ with distinct $\delta$.
\end{rem}

\section{Examples}

 \subsection{\normalsize{Non-optimal recursive towers with all but one invariants zero}}
\paragraph{}
 We first recall that a \textit{non-optimal} tower means; a tower which does not attain the generalized Drinfeld-Vladut bound given in Theorem \ref{thmprem}(ii). We begin with some simple remarks, which we will apply in the subsequent examples.
 \begin{rem}\label{Hasegawa.rem}
Let $\mathcal{F}=(F_n)_{n\geq 0}$. For any $n\geq 1$, we have that
\[ r B_r\left(F_n/\mathbb{F}_q\right)=\sum _{d|r} \mu(\frac{r}{d})B_1(F_n\mathbb{F}_{q^d}/\mathbb{F}_{q^d}),\]
where $\mu$ denotes the Mobius function (see \cite[p.207] {Stichtenoth}). Therefore,
\[ r \beta_r\left(\mathcal{F}/\mathbb{F}_q\right)=\sum _{d|r} \mu(\frac{r}{d})\beta_1(\mathcal{F}\mathbb{F}_{q^d}/\mathbb{F}_{q^d}).\]
\end{rem}
\begin{rem}\label{mu} For any $t\geq 1$ we have 
\begin{eqnarray*}
\sum_{t|d|r} \mu(\frac{r}{d})=\begin{cases} 
                                     1 & \textrm{ if $r=t$,}\\
                                     0 & \textrm{ else.} 
                           \end{cases}
\end{eqnarray*}
\end{rem}                          
\begin{proof} We know that 

\begin{eqnarray*}
\sum_{d|r} \mu(d)=\begin{cases} 
                                     1 & \textrm{ if $r=1$,}\\
                                     0 & \textrm{ else.} 
                           \end{cases}
\end{eqnarray*}
Clearly, if $t\nmid r$, then there is nothing to prove. So we assume that $r=t^ns$ for some $n\geq 1$ and $t\nmid s$, where $s$ is an integer.
We set $d:=tk$ where $k$ is a factor of $\frac {r}{t}$.  Then 
\begin{eqnarray*}
\sum_{t|d|r} \mu(\frac{r}{d})&=&\sum_{tk|t^ns}\mu \left(\frac{r}{d}\right)=\sum_{k|t^{n-1}s}\mu\left(\frac{t^{n-1}s}{k}\right)\\
                             &=& \sum_{k|t^{n-1}s} \mu\left(k\right)=\begin{cases} 
                             1 & \textrm{ if $t^{n-1}s=1$}\\
                             0 & \textrm{ else.} 
                  \end{cases}
\end{eqnarray*}
Hence, since $r=t^ns$, the result follows. 
\end{proof}
\begin{exm} \label{non-optimal1} Let $\mathcal{F}$  be the tower defined by the equation $y^2+y=x+1+1/x$ 
over a finite field $\mathbb{F}_{2^e}$ for some $e\geq 1$. Then by Example 5.8 in \cite{B04}, we have 
\begin{eqnarray*}
        \beta_1(\mathcal{F})= \begin{cases}
                                 3/2  & \textrm{ if $3$ divides $e$, } \\
                                  0   &  \textrm{ else. }
                      \end{cases}
\end{eqnarray*}  
Now we consider the tower $\mathcal{F}$ over $\mathbb{F}_q$, with $q=2^e$ where $3 \nmid e$.  Then by applying  Remark \ref{Hasegawa.rem}, we obtain that
\[ r \beta_r(\mathcal{F}/\mathbb{F}_q)= \sum_{3|d|r} \mu(\frac{r}{d})\beta_1(\mathcal{F}\mathbb{F}_{q^d}/\mathbb{F}_{q^d})=\frac{3}{2}\sum_{3|d|r} \mu(\frac{r}{d}).\]
Hence, if $r$ is not divisible by $3$, then clearly $\beta_r(\mathcal{F}/\mathbb{F}_q)=0$. Now suppose that $3$ divides $r$. By applying Remark \ref{mu} with $t=3$, the following holds:
\[\PP(\FF/\GF_q)=\big\{3\big\} \textrm{ with } \beta_3(\mathcal{F}/\mathbb{F}_q)=\frac{1}{2}.\]
\end{exm}
\hspace{-0.6cm}This example implies that for $q=2^e$ where $3\nmid e$, we have 
\[A_3(q)\geq \frac{1}{2}.\]
Notice that for $q=2$, we get a lower bound close to the Drinfeld-Vladut bound of order $3$ with the \textit{deficiency}  $\delta(\mathcal{F}/\mathbb{F}_2)=0.17962$. 
\begin{exm} \label{non-optimal2}Let $q=3^e$ for some $e\geq 1$ and $\mathcal{F}$ be the tower given in \cite{GSR003}, which is defined by the equation 
$y^2=\frac{x(x-1)}{x+1}\textrm{ over $\mathbb{F}_q$}$.
Then by Example 2.4.3 in \cite{H2007}, we have
 \begin{eqnarray*}
        \beta_1(\mathcal{F})= \begin{cases}
                                 2/3  &  \textrm{ if $e$ is even,}\\
                                 0  & \textrm{ if $e=1$.}                      
\end{cases}
\end{eqnarray*} 
 Now by applying Remark \ref{mu} with $t=1$, we obtain that
 \[ \PP(\mathcal{F}/\mathbb{F}_{9})=\big\{1\big\} \textrm{ with } \beta_1(\mathcal{F}/\mathbb{F}_{9})=\frac{2}{3}.\]
 Thus, its \textit{deficiency}  $\delta(\mathcal{F}/\mathbb{F}_9)=\frac{2}{3}\approx 0.66$.
\end{exm}
\begin{exm} \label{non-optimal3} Let $q$ be a power of the prime number $3$, and $\mathcal{F}$ be the tower given in \cite{GSR003}, which is defined by the equation $y^2=\frac{x(x+1)}{x-1} \textrm{ over $\mathbb{F}_q$}.$
Then by a remark in \cite[p.46]{H2007}, we have 
\[\beta_r(\mathcal{F}/\mathbb{F}_{81^n})=2\textrm{ for all $n\geq 1$}.\]
Now by applying Remark \ref{mu} with $t=1$, we get that
 \[ \PP(\mathcal{F}/\mathbb{F}_{81})=\big\{1\big\} \textrm{ with } \beta_1(\mathcal{F}/\mathbb{F}_{81})=2,\]
 and so  the \textit{deficiency} $\delta(\mathcal{F}/\mathbb{F}_{81})=0.75$.
\end{exm}
\begin{exm} \label{non-optimal4} Let $p\geq 3$ be a prime number and $\mathcal{F}$ be the tower over $\mathbb{F}_{p^e}$ defined by the equation $y^2=(x^2+1)/ {2x}$. Then by Example 5.9 in \cite{B04}, we have 
\begin{eqnarray*}
        \beta_1(\mathcal{F})= \begin{cases}
                                 p-1  & \textrm{ if $2$ divides $e$, } \\
                                  0   &  \textrm{ else. }
                      \end{cases}
\end{eqnarray*}  
 We consider the tower $\mathcal{F}$ over $\mathbb{F}_{q}$, where $q:=p^e$ with $2$ not dividing $e$. Then, by applying Remark \ref{mu} with $t=2$, we obtain that
\[ \PP(\mathcal{F}/\mathbb{F}_{q})=\big\{2\big\} \textrm{ with } \beta_2(\mathcal{F})=\frac{p-1}{2}.\]
Thus, the tower $\mathcal{F}$ over $\mathbb{F}_p$ attains the Drinfeld-Vladut bound of order $2$. 
\end{exm} 

\begin{cor} In the following cases there exists a non-optimal recursive tower over $\mathbb{F}_q$ with all but one invariants zero:
\begin{itemize}
\item[(i)] $q= 2^e$ with $3$ not dividing $e$,
\item[(ii)] $q=3^e$ with $e=2$ or $4$,
\item[(iii)] $q=p^e$ with $p\geq 3$, $e>2$ and $2$ not dividing $e$. 
\end{itemize}
\end{cor}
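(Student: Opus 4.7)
The plan is to assemble the corollary directly from the four preceding examples, treating each case separately and in each one (a) citing the relevant example to identify a recursive tower with a single positive invariant, and (b) verifying that the single nonzero $\beta_r$ does not saturate the Drinfeld--Vladut bound of order $r$ given in Theorem \ref{thmprem}(ii).

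For case (i), I would take $\FF$ to be the tower of Example \ref{non-optimal1} defined by $y^2+y=x+1+1/x$ over $\GF_q$ with $q=2^e$, $3\nmid e$. Example \ref{non-optimal1} shows that $\PP(\FF/\GF_q)=\{3\}$ and $\beta_3(\FF/\GF_q)=1/2$, so all invariants except $\beta_3$ vanish. For non-optimality it suffices to check
\[\frac{3\beta_3(\FF)}{q^{3/2}-1}=\frac{3/2}{q^{3/2}-1}<1,\]
which holds as soon as $q^{3/2}>5/2$, and this is satisfied for every $q=2^e\geq 2$. Case (ii) is handled analogously using Example \ref{non-optimal2} for $q=9$ and Example \ref{non-optimal3} for $q=81$: in both cases $\PP(\FF)=\{1\}$ and the deficiencies $2/3$ and $3/4$ were computed there, which are strictly positive, so the towers are non-optimal.

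For case (iii), I would take $\FF$ to be the tower of Example \ref{non-optimal4} defined by $y^2=(x^2+1)/(2x)$ over $\GF_q$ with $q=p^e$, $p\geq 3$, $e$ odd. Example \ref{non-optimal4} gives $\PP(\FF/\GF_q)=\{2\}$ and $\beta_2(\FF/\GF_q)=(p-1)/2$, so all other invariants are zero. The deficiency equals
\[\delta(\FF/\GF_q)=1-\frac{2\beta_2(\FF)}{q-1}=1-\frac{p-1}{p^e-1},\]
which vanishes exactly when $e=1$ (the optimal case already noted in Example \ref{non-optimal4}) and is strictly positive for $e>1$; in particular the hypothesis $e>2$ with $2\nmid e$ (so $e\geq 3$) yields $\delta(\FF/\GF_q)>0$ and hence a non-optimal tower.

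The only substantive check in the whole argument is the strict inequality in the Drinfeld--Vladut bound in each case, and this reduces in every instance to an elementary numerical comparison, so I do not anticipate a genuine obstacle; the delicate point worth emphasising is only the role of the condition $e>2$ in (iii), which is precisely what excludes the optimal subcase $e=1$ of Example \ref{non-optimal4}. The recursiveness of the towers in all four cases is inherited from the defining equations cited in the respective examples.
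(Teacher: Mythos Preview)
Your proposal is correct and follows exactly the paper's approach: the paper's proof is the single line ``See Examples \ref{non-optimal1}, \ref{non-optimal2}, \ref{non-optimal3}, and \ref{non-optimal4}, respectively.'' You have simply unpacked this reference and added the explicit verification of non-optimality via the deficiency, which the paper leaves implicit in the deficiency computations already recorded in each example.
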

\begin{proof} See Examples \ref{non-optimal1}, \ref{non-optimal2}, \ref{non-optimal3},  and \ref{non-optimal4}, respectively. 
\end{proof}

\subsection{Recursive towers attaining the Drinfeld-Vladut bound of order $r$}
\begin{exm} \label{GS-2} Let $q^r$ be a square and $\mathcal{F}$ be the tower defined by the equation 
\begin{equation}\label{2ndtower}
y^{q^{r/2}}+y=\frac{x^{q^{r/2}}}{x^{q^{r/2}-1}+1}
\end{equation}
over a finite field $\mathbb{F}_{p^e}$, for some $e\geq 1$. 
Then by Example 5.7 in \cite{B04}, we have 
\begin{equation*}
        \beta_1(\mathcal{F})= \begin{cases}
                                 q^{r/2}-1  & \textrm{ if  } \mathbb{F}_{q^r}\subseteq \mathbb{F}_{p^e}, \\
                                  0   &  \textrm{ else }
                      \end{cases}
\end{equation*}                           
Now consider the tower $\mathcal{E}/\mathbb{F}_{q^r}$ defined by (\ref{2ndtower}), which is studied in \cite{GS096}. Then from Remark \ref{Hasegawa.rem}, we get 
 \[r \beta_r(\mathcal{E}/\mathbb{F}_q)=\sum_{d|r}\mu(\frac{r}{d})\beta_1(\mathcal{E}/\mathbb{F}_{q^d}),\]
 which implies that
     \[\beta_r(\mathcal{E}/\mathbb{F}_q)=\frac{q^{r/2}-1}{r}=A_r(q). \]
\end{exm}
\begin{exm} \label{GS-1} The tower $\mathcal{T}$ defined by the equation 
\[y^{q^{r/2}} x^{q^{r/2}-1}+y=x^{q^{r/2}}\]
 over $\mathcal{F}_{q^r}$, with $q^r$ a square, is optimal and from \cite[Remark 3.11, Corollary 2.4]{GS096}, we have that $\beta_1(\mathcal{E})\geq\beta_1(\mathcal{T})$, and so
\begin{eqnarray*}
        \beta_1(\mathcal{T})= \begin{cases}
                                 q^{r/2}-1  & \textrm{ over } \mathbb{F}_{q^r}, \\
                                  0         &  \textrm{ over $\mathbb{F}_{p^e}$ where $\mathbb{F}_{q^r}\nsubseteq \mathbb{F}_{p^e}$.}
                             \end{cases}
\end{eqnarray*} 
Then with the same reason as in the previous example we get that
\[\beta_r(\mathcal{T}/\mathbb{F}_q)=\frac{q^{r/2}-1}{r}=A_r(q). \]
\end{exm}
Now from Examples \ref{GS-2} and \ref{GS-1} the following is immediate:
\begin{cor} \label{orderr} For any $r\geq 1$ and any prime power $q$ such that $q^r$ is a square, there exists a tower of function fields over $\mathbb{F}_{q^r}$ attaining the generalized Drinfeld-Vladut bound of order $r$. 
\end{cor}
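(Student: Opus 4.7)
The statement is an immediate corollary of Examples \ref{GS-2} and \ref{GS-1}, and my proof proposal amounts to pointing out how these two examples already supply the required towers. Concretely, set $Q := q^{r/2}$, which is a prime power by the hypothesis that $q^r$ is a square, and take $\mathcal{F}$ to be either of the recursive towers defined by $y^{Q}+y = x^{Q}/(x^{Q-1}+1)$ or $y^{Q}x^{Q-1}+y = x^{Q}$, viewed as a tower over $\GF_q$.

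The plan then has essentially one step. I would invoke the known values of $\beta_1$ for these towers under constant field extensions, namely
\[
\beta_1\bigl(\mathcal{F}\GF_{p^e}/\GF_{p^e}\bigr) = \begin{cases} Q-1 & \text{if } \GF_{q^r}\subseteq \GF_{p^e}, \\ 0 & \text{otherwise,} \end{cases}
\]
as recalled from \cite{B04} and \cite{GS096}, and specialize to the constant field extensions $\GF_{q^d}$ for $d \mid r$. By the M\"obius inversion identity of Remark \ref{Hasegawa.rem} applied to $\mathcal{F}$ viewed over $\GF_q$,
\[
r\,\beta_r(\mathcal{F}/\GF_q) \;=\; \sum_{d\mid r}\mu(r/d)\,\beta_1\bigl(\mathcal{F}\GF_{q^d}/\GF_{q^d}\bigr),
\]
and the vanishing property above forces all summands with $d < r$ to be zero (since $\GF_{q^r}\subseteq \GF_{q^d}$ combined with $d \mid r$ forces $d = r$). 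The single surviving term yields $r\,\beta_r(\mathcal{F}/\GF_q) = \mu(1)(Q-1) = q^{r/2}-1$, so
\[
\beta_r(\mathcal{F}/\GF_q) \;=\; \frac{q^{r/2}-1}{r},
\]
which by the corollary following Theorem \ref{DV} is exactly the Drinfeld--Vladut bound of order $r$, and hence equals $A_r(q)$.

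There is no genuine obstacle in the argument, since the whole substance has been packaged into the two preceding examples. The only points I would double-check are that the vanishing of $\beta_1$ over constant field extensions not containing $\GF_{q^r}$ is strict (this is what guarantees exactly one surviving M\"obius term and is precisely the dichotomy cited from \cite{B04}), and that the tower can legitimately be viewed with $\GF_q$ as its constant field (which follows because the defining equations have coefficients in the prime subfield, so the tower is defined over every $\GF_{q^d}$ with $d \mid r$, including $d=1$).
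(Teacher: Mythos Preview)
Your proposal is correct and follows essentially the same approach as the paper: the paper simply states that the corollary is immediate from Examples \ref{GS-2} and \ref{GS-1}, and your proposal just unpacks the M\"obius-inversion computation already carried out in those examples.
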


\section{ \normalsize{Some remarks on $A_r(q)$}}
\paragraph{}
 Here we consider an exact sequence $\mathcal{F}=(F_n)_{n\geq 0}$ of function fields over $\mathbb{F}_q$ with its constant field extension $\mathcal{F}\mathbb{F}{q^r}$ of degree $r$ for some $r\geq 1$. 
\begin{lem}\label{exact}
 \[\beta_1(\mathcal{F}\mathbb{F}_{q^r})\geq r  \beta_r(\mathcal{F}).\]
\end{lem}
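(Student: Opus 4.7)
The plan is to reduce the statement to the standard behaviour of places under constant field extensions, combined with the fact that such extensions preserve the genus. The only substantive ingredient is a place-splitting fact from Chapter~3.6 of Stichtenoth.

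First I would use that, for any function field $F/\mathbb{F}_q$ and any place $P$ of $F$ with $\deg P = r$, the place $P$ splits completely in the constant field extension $F\mathbb{F}_{q^r}/\mathbb{F}_{q^r}$ into exactly $r$ places, each of degree one. (This is essentially the same observation already used inside the proof of Proposition \ref{propprem}.) Applying this to each $F_n$ yields the pointwise inequality
\[
B_1(F_n\mathbb{F}_{q^r}) \;\geq\; r\cdot B_r(F_n)
\]
for every $n \geq 0$.

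Next I would invoke that constant field extensions preserve the genus, so $g(F_n\mathbb{F}_{q^r}) = g(F_n)$ for every $n$. Dividing the inequality above by $g(F_n) = g(F_n\mathbb{F}_{q^r})$ and letting $n \to \infty$ gives
\[
\liminf_{n\to\infty}\frac{B_1(F_n\mathbb{F}_{q^r})}{g(F_n\mathbb{F}_{q^r})} \;\geq\; r\cdot \beta_r(\mathcal{F}),
\]
where the existence of the right-hand limit is guaranteed by the asymptotic exactness of $\mathcal{F}$.

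The only subtle point, and the one that functions as the main (mild) obstacle, is writing the left-hand side as $\beta_1(\mathcal{F}\mathbb{F}_{q^r})$: for this the sequence $\mathcal{F}\mathbb{F}_{q^r}$ must itself be asymptotically exact over $\mathbb{F}_{q^r}$, so that $\liminf$ can be replaced by $\lim$. This follows from the asymptotic exactness of $\mathcal{F}$ by the same Möbius-inversion relation used in Remark \ref{Hasegawa.rem}, which expresses $B_1$ in the degree-$d$ constant field extensions as a linear combination of the $B_s(F_n)$ (and conversely). Once the limit on the left exists, the inequality above becomes the desired $\beta_1(\mathcal{F}\mathbb{F}_{q^r}) \geq r\beta_r(\mathcal{F})$.
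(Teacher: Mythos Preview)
Your proof is correct and follows essentially the same route as the paper's. The paper uses the exact identity $B_1(F_n\mathbb{F}_{q^r})=\sum_{i\mid r} i\,B_i(F_n)$ together with $g(F_n\mathbb{F}_{q^r})=g(F_n)$, which simultaneously yields both the inequality $B_1(F_n\mathbb{F}_{q^r})\geq r\,B_r(F_n)$ and the existence of the limit $\beta_1(\mathcal{F}\mathbb{F}_{q^r})$; you obtain these two points in separate steps, but the underlying content is the same.
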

\begin{proof}
Since $F_n\mathbb{F}_{q^r}$ is a constant field extension of $F_n/\mathbb{F}_{q}$, we have that 
\begin{equation}\label{usufuleqn1}
B_1(F_n\mathbb{F}_{q^r})=\sum_{i|r} i B_i(F_n)\textrm{ and } g(F_n\mathbb{F}_{q^r})=g(F_n),
\end{equation}
and therefore
\begin{eqnarray*}
 \beta_1(\mathcal{F}\mathbb{F}_{q^r})&=&\lim_{n\rightarrow \infty}\frac{B_1(F_n\mathbb{F}_{q^r})}{g(F_n\mathbb{F}_{q^r})}=\lim_{n\rightarrow \infty}\frac{1}{g(F_n)} \left(\sum_{i|r} i B_i(F_n)\right)\\
     &\geq & \lim_{n\rightarrow \infty} \frac{r  B_r(F_n)}{g(F_n)}=r \beta_r(\mathcal{F}).
\end{eqnarray*}
\end{proof}
By using Lemma \ref{exact},  for any integer $r\geq 1$ and prime power $q$, one gets
\begin{cor} $A(q^r)\geq r A_r(q)$.
\end{cor}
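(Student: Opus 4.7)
The plan is to deduce the corollary from Lemma \ref{exact} together with a standard extraction of an asymptotically exact subsequence from a genus-tending-to-infinity sequence of function fields that realizes $A_r(q)$.

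First, by definition of the $r$-th Ihara constant, there exists a sequence $(F_n)_{n\geq 0}$ of function fields over $\mathbb{F}_q$ with $g(F_n) \to \infty$ such that
\[
\lim_{n \to \infty} \frac{B_r(F_n)}{g(F_n)} = A_r(q).
\]
Because the ratios $B_s(F_n)/g(F_n)$ are uniformly bounded (by $(q^{s/2}-1)/s$ via the generalized Drinfeld--Vladut bound applied to finite truncations, or more directly by elementary place-counting estimates), a diagonal extraction along $s = 1, 2, 3, \ldots$ produces a subsequence, which I will still denote by $(F_n)_{n\geq 0}$, that is asymptotically exact in the sense of Definition \ref{defnprem} and still satisfies $\beta_r(\mathcal{F}) = A_r(q)$.

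Next, I apply Lemma \ref{exact} to this asymptotically exact sequence $\mathcal{F} = (F_n)_{n\geq 0}$, which gives
\[
\beta_1(\mathcal{F}\mathbb{F}_{q^r}) \;\geq\; r\,\beta_r(\mathcal{F}) \;=\; r\,A_r(q).
\]
Since $g(F_n \mathbb{F}_{q^r}) = g(F_n) \to \infty$ and every $F_n \mathbb{F}_{q^r}$ is a function field over $\mathbb{F}_{q^r}$, the left-hand side is bounded above by the limit superior defining $A(q^r) := A_1(q^r)$, so
\[
A(q^r) \;\geq\; \beta_1(\mathcal{F}\mathbb{F}_{q^r}) \;\geq\; r\,A_r(q),
\]
which is the desired inequality.

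The only nontrivial ingredient is the existence of the asymptotically exact subsequence; this is a classical result of Tsfasman \cite{T092} obtained by a Bolzano--Weierstrass style diagonal argument on the countably many bounded sequences $\bigl(B_s(F_n)/g(F_n)\bigr)_{n\geq 0}$ indexed by $s \in \mathbb{N}$. Once that extraction is in hand the corollary is essentially formal: Lemma \ref{exact} bounds $\beta_1$ after constant extension from below by $r\beta_r$, and the limsup definition of $A(q^r)$ bounds $\beta_1$ of any admissible sequence from below by $A(q^r)$.
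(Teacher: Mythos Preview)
Your proof is correct and follows the same route the paper intends: the paper simply says the corollary follows ``by using Lemma \ref{exact}'', and you have spelled out how. Two small remarks. First, the extraction of an asymptotically exact subsequence, while correct, is not needed: the proof of Lemma \ref{exact} already establishes the \emph{pointwise} inequality
\[
\frac{B_1(F\mathbb{F}_{q^r})}{g(F\mathbb{F}_{q^r})}\;\geq\; r\,\frac{B_r(F)}{g(F)}
\]
for every function field $F/\mathbb{F}_q$ of positive genus, so taking $\limsup$ along any sequence with $g(F_n)\to\infty$ and $B_r(F_n)/g(F_n)\to A_r(q)$ gives $A(q^r)\geq rA_r(q)$ directly. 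Second, in your final summary sentence you wrote that the definition of $A(q^r)$ bounds $\beta_1$ ``from below by $A(q^r)$''; you mean from above, as you had it correctly in the displayed chain of inequalities just before.
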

This fact might be well-known, but we could not find any reference in the literature.

Next, one can easily conclude from Lemma \ref{exact} that if for some $r\geq 1$ the sequence $\mathcal{F}/\mathbb{F}_q$ attains the Drinfeld-Vladut bound of order $r$, then the sequence $\mathcal{F}\mathbb{F}_{q^r}/\mathbb{F}_{q^r}$ attains the classical Drinfeld-Vladut bound, i.e., of order one. Furthermore, in that case we have 
\[q^{r/2}-1\geq A(q^r)\geq \beta_1(\mathcal{F}\mathbb{F}_{q^r})\geq r \beta_r(\mathcal{F})=r A_r(q)=r \left( \frac{q^{r/2}-1}{r} \right)=q^{r/2}-1,\]
which implies that 
\begin{equation}\label{rA_r(q)=A(q^r)}
A(q^r)=r A_r(q)=q^{r/2}-1.
\end{equation}
We note here that it follows from  Corollary \ref{orderr} that for any square $q^r$, the equality $(\ref{rA_r(q)=A(q^r)})$ holds. However, in the case that $q^r$ is not a square, it is not known whether there exists any  asymptotically exact sequences of algebraic function fields over a finite field $\mathbb{F}_q$ attaining  the Drinfeld-Vladut bound of order $r$. Therefore, the following question arises:\\\\
\textbf{Problem:} Are there any integers $r\geq 2$ and any prime power $q$, except when $q^r$ is a square, such that the following holds:
\[ A(q^r)=r A_r(q) ?\]

\end{document}